\DeclarePairedDelimiter\floor{\lfloor}{\rfloor}
\newcommand{\Rr}{\mathbb{R}}
\newcommand{\Nn}{\mathbb{N}}
\newcommand{\Cc}{\mathbb{C}}
\newcommand{\Zz}{\mathbb{Z}}
\newtheorem{thm}{Theorem}[section]
\newtheorem{prop}[thm]{Proposition}
\newtheorem{defi}[thm]{Definition}
\newtheorem{rema}[thm]{Remark}
\newtheorem{coro}[thm]{Corollary}
\newtheorem{lem}[thm]{Lemma}
\newtheorem{convention}[thm]{Convention}
\title{Symplectic resolutions of the quotient of $ {\mathbb R}^2 $ by a non-finite symplectic group}
\author{Hichem Lassoued$^{1}$, Camille Laurent-Gengoux$^{2}$}
\address{\emph{1}, Nantes University, 44000 Nantes, France.}
\address{\emph{2}, Institut Elie Cartan de Lorraine, I.E.C.L., UMR 7502, Universit\'e de Lorraine, rue Augustin Fresnel, 57000 Metz, France.  }
\date{January 3rd, 2022}
\begin{document}

\begin{abstract}
We construct smooth symplectic resolutions of the quotient of $\mathbb R^2 $ under some \emph{infinite} discrete sub-group of ${\mathrm{ GL}}_2(\mathbb R) $ preserving a log-symplectic structure. This extends from algebraic geometry to smooth real differential geometry the Du Val symplectic resolution of $\mathbb C^2 \hspace*{-1.5pt} / \hspace*{-1.5pt}G$, with $G \subset  {\mathrm{ SL}}_2(\mathbb C) $ a finite group. 
The first of these infinite groups is $G=\mathbb Z$, identified to triangular matrices with spectrum $\{1\} $. Smooth functions on the quotient $\Rr^2 \hspace*{-1.5pt} / \hspace*{-1.5pt} G $ come with a natural Poisson bracket, and $\Rr^2\hspace*{-1.5pt} / \hspace*{-1.5pt}G$ is for an arbitrary $k \geq 1$ set-isomorphic to the real Du Val singular variety $A_{2k} = \{(x,y,z) \in \mathbb R^3 , x^2 +y^2= z^{2k}\}$. We show that each one of the usual minimal resolutions of these Du Val varieties are symplectic resolutions of $\Rr^2\hspace*{-1.5pt} / \hspace*{-1.5pt}G$. The same holds for $G'=\mathbb Z \rtimes \mathbb Z\hspace*{-1.5pt} / \hspace*{-1.5pt}2\mathbb Z$ (identified to triangular matrices with spectrum $\{\pm 1\} $), with the upper half of $D_{2k+1} $ playing the role of $A_{2k}$.
\vspace{.5cm}

\noindent
\end{abstract}

\maketitle

\tableofcontents

\section{Introduction}
This article aims to initiate the study of quotients of smooth Poisson manifolds $(M,\pi) $ by an \underline{infinite and non-proper} discrete group $G$ of Poisson automorphisms of $(M,\pi)$. 
In the sequel, we call \emph{quotient of $M$ by $G$} and denote by $ M\hspace*{-1.5pt} / \hspace*{-1.5pt}G$ (with an abuse of notation) the quotient of $M$ under the closure of the equivalence relation induced by $G$, \emph{i.e.} the equivalence relation generated by the relation $ x \sim y $ if and only if there exists sequences $ (x_n)_{n \in \mathbb N},(y_n)_{n \in \mathbb N}$ with limits $x$ and $y$ respectively such that $ x_n$ and $y_n$ are in the same $G$-orbit for all $n \in \mathbb N$. Such quotients are not varieties 
but they are always "Poisson sets" \cite{Hichem} in the following sense: a real valued function on $M\hspace*{-1.5pt} / \hspace*{-1.5pt}G $ is said to be \emph{smooth} if its pull-back to $M$ is smooth on $M$. In other words, we define the algebra of smooth functions on $M\hspace*{-1.5pt} / \hspace*{-1.5pt}G $ by  $ C^\infty(M\hspace*{-1.5pt} / \hspace*{-1.5pt}G) := C^\infty(M)^G$. Since $G$ preserves the Poisson structure $\pi $, the algebra $ C^\infty(M\hspace*{-1.5pt} / \hspace*{-1.5pt}G)$ comes equipped with a Poisson bracket (see \cite{c3}, Section 5.4).

\begin{defi} \cite{Hichem1} \label{def:Lassoued}
Let $(M,\pi_M)$ be a Poisson manifold, and $C^\infty(M\hspace*{-1.5pt} / \hspace*{-1.5pt}G) := C^\infty(M)^G$. 
A \emph{symplectic realization} of $M\hspace*{-1.5pt} / \hspace*{-1.5pt}G$ is a triplet $(\Sigma, \pi_{\Sigma}, \varphi)$ where $(\Sigma, \pi_{\Sigma})$ is a symplectic manifold (maybe with boundary), $\varphi: \Sigma \to M\hspace*{-1.5pt} / \hspace*{-1.5pt}G$ is a map such that
\begin{enumerate}
    \item[i)] $\varphi \colon \Sigma \to M\hspace*{-1.5pt} / \hspace*{-1.5pt}G $ is smooth, \emph{i.e.} $\varphi^*(F)$ is a smooth function on $\Sigma $  for all $F \in C^\infty(M\hspace*{-1.5pt} / \hspace*{-1.5pt}G)$,
    \item[ii)] the algebra morphism $\varphi^*\colon C^\infty(M\hspace*{-1.5pt} / \hspace*{-1.5pt}G) \to C^{\infty}(\Sigma)$ is a Poisson morphism.
\end{enumerate}
Such a symplectic realization $(\Sigma,\pi_\Sigma,\varphi) $ is called a \emph{symplectic resolution} of $M\hspace*{-1.5pt} / \hspace*{-1.5pt}G $ when:
\begin{enumerate}
    \item[iii)] there exists an open dense $G$-saturated subset $U \subset M$ on which the $G$-action is discrete, proper and free\footnote{This implies that $U\hspace*{-1.5pt} / \hspace*{-1.5pt}G $ is a manifold (and makes sense of Condition \emph{(iiii)}).},
    \item[iiii)] $ \varphi : \varphi^{-1}(U\hspace*{-1.5pt} / \hspace*{-1.5pt}G) \to U\hspace*{-1.5pt} / \hspace*{-1.5pt}G $ is a diffeomorphism.
\end{enumerate}
\end{defi}

\noindent
Symplectic realizations of $M\hspace*{-1.5pt} / \hspace*{-1.5pt}G $ defined as in
Definition \ref{def:Lassoued} imitates the nowadays classical notion of symplectic realizations of smooth Poisson manifolds \cite{BX16}-\cite{DCW}-\cite{DZ}-\cite{Karazev}.
Symplectic realizations of $M\hspace*{-1.5pt} / \hspace*{-1.5pt}G $ defined as in
Definition \ref{def:Lassoued} imitates the nowadays classical notion of
symplectic resolutions of affine Poisson varieties as in \cite{sym}-\cite{BaoHua}-\cite{c1234}: indeed, it is an extension of this notion to the context of real differential geometry (as in \cite{c1234}).

\noindent
In algebraic geometry, examples of symplectic resolutions  include quotients of symplectic vector spaces by \textit{finite} subgroup of the symplectic group, see \cite{BellamySchedler}. The most basic examples of such quotients are the very classical Du Val singularities $ (A_n, D_n,E_6,E_7,E_8)$, see \cite{Du Val}.
It is known that the minimal desingularization of these singularities  is a symplectic resolution \cite{sym}-\cite{res}. This implies that those (more precisely, their real parts) are also symplectic resolutions in the sense of Definition~\ref{def:Lassoued}. 

\noindent
We wish to investigate quotients of Poisson vector spaces which are symplectic (at regular points of the action) under infinite discrete groups and their symplectic resolutions.
We will see that for $G$ a discrete infinite subgroup of $GL(V) $, there may be very few non-constant algebraic $G$-invariant functions, while $G$-invariant smooth functions $ C^{\infty}(V \hspace*{-1.5pt} / \hspace*{-1.5pt} G ) $ may be still large enough to be worth of study. Our problem can not be addressed within algebraic geometry: We have to work with smooth functions.

\noindent
But this is certainly not an easy matter. Our present purpose is to understand in detail two examples that we think to be "generic", at least for b-manifolds. Both will turn out to be surprisingly subtle, although they are simply the quotients of $\mathbb R^2$ by the groups 
 \begin{equation}\label{eq:GG'}
 G = \left\{ \left(  \begin{array}{cc} 1 & n\\ 0 & 1 \end{array}
 \right) \,  \middle| n \in \mathbb Z \right\} \hbox{ and }  G' = \left\{ \left(  \begin{array}{cc} 1 & n\\ 0 & \epsilon \end{array}
 \right) \,  \middle|  \epsilon \in \{\pm1\}, n \in \mathbb Z   \right\} \end{equation}
which are isomorphic to $\Zz $ and $\Zz\hspace*{-1.5pt} / \hspace*{-1.5pt}2\Zz \rtimes \Zz$ respectively.
These infinite but discrete groups act on $\mathbb R^2 $ by Poisson automorphisms of the   Poisson structure
 \begin{equation}\label{eq:Poisson1} \pi_{\mathbb R^2} := \frac{1}{2 \pi} \, q  \, \frac{\partial}{ \partial p} \wedge \frac{\partial}{ \partial q}  \end{equation}
where $(p,q) $ stands for the canonical coordinates on $\mathbb R^2 $.  This Poisson bracket is log-symplectic, making $\mathbb R^2 $ a b-manifold \cite{Miranda}.

\noindent
It was proven in \cite{Hichem1} that the Poisson structure \eqref{eq:Poisson1} does not admit symplectic resolutions.  In contrast, the conclusion of the present article is that the quotients $\mathbb R^2 \hspace*{-1.5pt} / \hspace*{-1.5pt}G$ and $\mathbb R^2 \hspace*{-1.5pt} / \hspace*{-1.5pt}G'$ do admit symplectic resolutions. Although these are only two examples, the present article claims to be a first step toward a general theory of symplectic resolutions for singular Poisson spaces. In subsequent works, examples in higher dimension shall be given, and the pattern that we see here will repeat itself: Poisson singular spaces only make sense in the smooth world, they are equipped with one-to-one smooth maps onto some affine Poisson spaces that admit symplectic resolutions. Also, unlike in the algebraic geometry case, proving these results will require classical tools from analysis (e.g. Fourier series) used in a non-trivial manner. Last, our symplectic resolutions are not unique, which is very different from the Du Val case. 



\vspace{.5cm}
 
\noindent Let us describe the content of the article. In Section \ref{sec:quotient}, we study the algebras $C^{\infty}(\Rr^2\hspace*{-1.5pt} / \hspace*{-1.5pt}G):= C^{\infty}(\Rr^2)^G$ and  $C^{\infty}(\Rr^2\hspace*{-1.5pt} / \hspace*{-1.5pt}G'):= C^{\infty}(\Rr^2)^{G'}$.
In the process, we will explain why, unlike in the finite group case, real analytic or holomorphic contexts are not relevant here because they admit too few invariant functions. 
For smooth invariant functions, we give a useful Fourier series expansion. 

\noindent Section \ref{sec:DuVal} is dedicated to reminders on Du Val's symplectic resolutions (in the real case) of $ A_{2k}$ and $ D_{2k+1}$. In Section \ref{sec:Quotient2}, we show that for all $k \geq 2$, the quotient space $\Rr^2\hspace*{-1.5pt} / \hspace*{-1.5pt}G $ is in bijection (as a set) with the quotient singularity of Du Val $$ A_{2k}=\left\{(x,y,z)\in\Rr^3\mid x^2+y^2=z^{2k}\right\}.$$
We then show that this bijection $\underline{\phi_k}$ behaves well with respect to Poisson structures. By "behaves well", we mean that $\underline{\phi_k}^*:C^\infty(\mathbb R^3)  \to C^{\floor{k/2}} (\mathbb R^2\hspace*{-1.5pt} / \hspace*{-1.5pt}G)$ is a Poisson algebra morphism.


\noindent We also show similar results for the second group $G' \simeq  \Zz \rtimes \Zz\hspace*{-1.5pt} / \hspace*{-1.5pt}2\Zz$, but it is then the Du Val singularities of type $ D_{2k+1}=\left\{(x,y,z)\in\Rr^3\mid zx^2+y^2=z^{2k+1}\right\} $ which intervenes (more precisely, $D_{2k+1} \cap \{ z \geq 0 \}$). 


\noindent 
 Section \ref{sec:resolution} concludes this construction with an extremely surprising result: the (usual) symplectic resolution $\varphi:Z_k \to A_{2k} $ of the Du Val singularity $A_{2k}$ induces a symplectic resolution of the Poisson quotient $\Rr^2\hspace*{-1.5pt} / \hspace*{-1.5pt}G $, when composed with $\underline{\phi_k}^{-1}: A_{2k} \to \Rr^2\hspace*{-1.5pt} / \hspace*{-1.5pt}G   $. In other words, $ \underline{\phi_k}^{-1} \circ \varphi : Z_k \to  \Rr^2\hspace*{-1.5pt} / \hspace*{-1.5pt}G $ satisfies the requirements of Definition \ref{def:Lassoued}. This is not an easy result, and we have to deploy various techniques to achieve this, including Fourier analysis, and subtle considerations on the Du Val resolution. 

\noindent We also show similar results for the second group $G':=\Zz \rtimes \Zz\hspace*{-1.5pt} / \hspace*{-1.5pt}2\Zz$.
The usual symplectic resolution $\varphi:Z_k \to D_{2k+1} $, restricted to $\varphi^{-1} ( D_{2k+1} \cap \{z \geq 0\})  $, composed with $\overline{\phi_k}^{-1} :  D_{2k+1} \cap \{z \geq 0\}) \to \mathbb R^2\hspace*{-1.5pt} / \hspace*{-1.5pt}G' $  gives a smooth symplectic resolution of the Poisson quotient $\Rr^2\hspace*{-1.5pt} / \hspace*{-1.5pt}G'$ in this case.

\section{Quotient of $\mathbb{R}^2$ by two infinite groups}\label{sec:quotient}

We equip $ \Rr^2 $ with the coordinates $(p,q)$. Consider the infinite groups $G \simeq \Zz$ and  $G' \simeq \Zz\rtimes\Zz\hspace*{-1.5pt} / \hspace*{-1.5pt}2\Zz$ as in (\ref{eq:GG'}) and their actions on $\Rr^2$ by 

\begin{equation*}
\begin{array}{ccccc}
 & & \Zz\times\Rr^2 & \to & \Rr^2 \\
 & & n \cdot (p,q) & \mapsto & (p+nq,q) 
\end{array} \hbox{ \hspace{.5cm} and } \begin{array}{ccccc}
 & & (\Zz\rtimes\Zz\hspace*{-1.5pt} / \hspace*{-1.5pt}2\Zz) \times\Rr^2 & \to & \Rr^2\\
 & & (n,\epsilon) \cdot (p,q) & \mapsto & (p+nq,\epsilon q)
\end{array}
\end{equation*}
for all $(n,\epsilon)\in\Zz\times\{-1,1\}$ and $(p,q) \in \Rr^2$. 

\noindent
The closure of the equivalence relation $\sim $ induced by the action of $G = \Zz$ above is given by 
\begin{equation}\label{hna} (p,q) \sim (p+nq,q),
\end{equation}
for any pair $(p,q) \in \Rr^{2}$, $n\in\mathbb{Z}$ and $(p,0) \sim (p',0)$ for all $p,p' \in \Rr$. It happens to be an equivalence relation which is closed \emph{i.e.}, $\left\{(p_1,q_1)\times(p_2,q_2)\in\Rr^2 \times \Rr^2  \mid (p_1,q_1)\sim(p_2,q_2)\right\}$ is a closed set of $\Rr^2\times\Rr^2$.
For $G'=\Zz\rtimes\Zz\hspace*{-1.5pt} / \hspace*{-1.5pt}2\Zz$, we still have to identify $(p,q)$ and $(p,-q)$.
We denote by $ \sim'$ this relation. 

\begin{convention}
We denote (with a slight abuse of notation) the equivalence classes of the closed relations $\sim $ and $\sim' $ by $ \mathbb R^2\hspace*{-1.5pt} / \hspace*{-1.5pt}G$ and $\Rr^2\hspace*{-1.5pt} / \hspace*{-1.5pt}G'$.
\end{convention}

In general, for $\sim$ a closed equivalence relation on a manifold $M$, we call \emph{smooth function, real analytic function or function of class $C^k $ on  $M\hspace*{-1.5pt} / \hspace*{-1.5pt}\sim$} a function whose pull-back on $M $ through the natural projection $M \to M\hspace*{-1.5pt} / \hspace*{-1.5pt}\sim $ is  smooth, respectively real analytic or of class $C^k $. Equivalently, functions of one of these given types on $M\hspace*{-1.5pt} / \hspace*{-1.5pt}\sim$ are functions on $M$ constant on the equivalences classes. 

When the equivalence relation is generated by the closure of a relation given by the action of a group $G$, smooth functions (resp. real analytic or $C^k$-functions) on the quotient are exactly $G$-invariant smooth functions (resp. real analytic or $C^k$-functions) on $M$.
In the case we are interested with, we therefore have $\mathcal{F}[\Rr^2]^G\simeq \mathcal{F}[\Rr^2\hspace*{-1.5pt} / \hspace*{-1.5pt}G]$ and $\mathcal{F}[\Rr^2]^{G'}\simeq \mathcal{F}[\Rr^2\hspace*{-1.5pt} / \hspace*{-1.5pt}G']$ for $\mathcal F = C^\infty$ or $\mathcal C^k $. 


Let us study this quotient. 
The algebra of invariant functions $\mathcal{F}[\Rr^2]^G$ and $\mathcal{F}[\Rr^2]^{G'}$ may not be finitely generated. We take $G= \Zz $ and we start with the algebra $C^{\infty}(\Rr^2\hspace*{-1.5pt} / \hspace*{-1.5pt}G)= C^\infty (\Rr^2)^{G}$.

\begin{lem}\label{didii}
Every function $f \in C^{\infty}(\Rr^2)^G $ satisfies for all $i \geqslant 1, j \geqslant 0$ 
   $$\frac{\partial^{i+j}f}{\partial p^i\partial q^j}(p,0)=0, \hspace{1cm} \forall p\in \Rr .$$
\end{lem}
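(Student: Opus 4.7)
The plan is to Taylor expand $f$ at an arbitrary point $(p_0,0)$ and exploit the \emph{full} family of invariance identities $f(p_0 + nq, q) = f(p_0, q)$, $n \in \mathbb Z$, to force every Taylor coefficient involving at least one $p$-derivative to vanish.

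Concretely, fix $p_0 \in \mathbb R$, set $a_{ij} := \frac{\partial^{i+j} f}{\partial p^i \partial q^j}(p_0, 0)$, and for each $N \geq 1$ apply Taylor's theorem in two variables:
$$f(p, q) = \sum_{i+j \leq N} \frac{a_{ij}}{i!\, j!}(p - p_0)^i q^j + R_N(p, q), \qquad R_N(p, q) = o\bigl((|p-p_0|^2 + q^2)^{N/2}\bigr).$$
Specialising $p = p_0 + nq$ and subtracting the corresponding expansion of $f(p_0, q)$, the $G$-invariance $f(p_0 + nq, q) - f(p_0, q) = 0$ becomes, after collecting powers of $q$,
$$\sum_{m=1}^{N} P_m(n)\, q^m \;=\; R_N(p_0, q) - R_N(p_0 + nq, q), \qquad P_m(n) := \sum_{i=1}^{m} \frac{a_{i, m-i}}{i!\,(m-i)!}\, n^i.$$

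For each fixed $n \in \mathbb Z$ the right-hand side is $o(q^N)$ as $q \to 0$, because $\|(nq, q)\| = |q|\sqrt{n^2+1}$ and $n$ is held constant. A polynomial in $q$ of degree $\leq N$ that is $o(q^N)$ has all coefficients equal to $0$, so $P_m(n) = 0$ for every $m = 1, \dots, N$. But $P_m$ is itself a polynomial in $n$, and this identity holds for \emph{every} integer $n$; hence $P_m \equiv 0$, which forces each coefficient $\frac{a_{i, m-i}}{i!\,(m-i)!}$ with $i \geq 1$ to vanish separately. Letting $N$ be arbitrarily large exhausts every bidegree $(i, j)$ with $i \geq 1$, $j \geq 0$, which is the claim.

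The routine technical point is that the remainder $R_N(p_0 + nq, q)$ is $o(q^N)$ with $n$ fixed; this is immediate from the two-variable Taylor estimate. The conceptual core of the argument is the use of the \emph{infinite} nature of $G$: a single identity (say, $n = 1$) would yield only one linear relation per order $m$ among the coefficients $a_{i, m-i}$ and could not separate them, whereas letting $n$ range over all integers converts the obstruction into a polynomial identity in $n$, isolating each $a_{ij}$ with $i \geq 1$.
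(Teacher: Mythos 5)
Your proof is correct and is essentially the paper's own argument: both of you reduce the invariance $f(p_0+nq,q)=f(p_0,q)$ to the family of relations $\sum_{i=1}^{m}\tfrac{1}{i!\,(m-i)!}\,n^{i}\,\frac{\partial^{m}f}{\partial p^{i}\partial q^{m-i}}(p_0,0)=0$ for every $n\in\mathbb{Z}$ and each order $m$, and then conclude because a polynomial in $n$ with infinitely many integer roots has all coefficients zero. The only difference is one of packaging: the paper gets these relations directly by differentiating the invariance identity $m$ times with respect to $q$ and setting $q=0$ (no remainder estimates needed), whereas you extract them from the order-$N$ Taylor expansion with Peano remainder, which is valid but makes you do the (correct) extra bookkeeping that $R_N(p_0+nq,q)=o(q^N)$ for fixed $n$.
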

\begin{proof}[Proof]
The $ k^{th} $ derivative of the relation $f(p,q)=f(p+nq,q)$ with respect to $q$ gives
\begin{equation*}
    \frac{\partial^kf}{\partial q^k}(p,q)=\sum_{i=0}^{k} {{k}\choose{i}} n^{k-i}\frac{\partial^kf}{\partial p^i\partial q^{k-i}}(p+nq,q).\end{equation*}
For $q=0$, we have $$\sum_{i=0}^{k} {{k}\choose{i}} n^{i}\frac{\partial^kf}{\partial p^i\partial q^{k-i}}(p,0)=\frac{\partial^kf}{\partial q^k}(p,0).$$
The right hand term coincides with the $i=0$ term on left hand. We therefore obtain for all $n \in \mathbb Z$
 $$\sum_{i=1}^{k} {{k}\choose{i}} n^{i}\frac{\partial^kf}{\partial p^i\partial q^{k-i}}(p,0)=0.$$
The coefficients of a polynomial function that admits infinity many roots are zero. In particular, the coefficients of the polynomial
 $$P_{k} (X):=\sum_{i=1}^{k}  {{k}\choose{i}} \frac{\partial^kf}{\partial p^i\partial q^{k-i}}(p,0) \, X^{i}$$
are zero for all $k \in {\mathbb N}^*$ and $ p \in {\mathbb R}$. This
completes the proof.
\end{proof}


\begin{prop}\label{honi}
The space $C^{\infty}(\Rr^2\hspace*{-1.5pt} / \hspace*{-1.5pt}G)$ of smooth functions on $\Rr^2\hspace*{-1.5pt} / \hspace*{-1.5pt}G$ decomposes as follows $$C^{\infty}(\Rr^2\hspace*{-1.5pt} / \hspace*{-1.5pt}G)=C^{\infty}(q)+\widetilde{C},$$ 
where $C^{\infty}(q)$ is the algebra of smooth functions which depends only on the variable $q$, and $\widetilde{C} \subset C^{\infty}(\Rr^2\hspace*{-1.5pt} / \hspace*{-1.5pt}G)$ is the subalgebra of $G$-invariant smooth functions vanishing with all their partial derivatives\footnote{\label{footnote1}I.e. smooth functions $F$ on $ \mathbb R^2$ that satisfy $\frac{\partial^{i+j} F}{\partial p^i \partial q^{j}} (p,0) =0$ for all $p \in \mathbb R, i,j \in \mathbb N_0$.} along the line $q=0$. 
\end{prop}

\begin{proof}[Proof]
%
\noindent Consider a $G$-invariant smooth function $f(p,q)$. Define smooth functions $g,h$ on  $\Rr $ and $\Rr^2 $ respectively by $h(q):=g(0,q)$ and $g(p,q):=f(p,q)-h(q)$. We have $ f=h+g$ by construction. It therefore remains to show that $ g $ is an element of $\widetilde{C}$.

\noindent
Since the function $g$ is obviously $G$-invariant, Lemma \ref{didii} implies that for all $i \geq 1, j \geq 0$
$$\frac{\partial^{i+j} g}{\partial p^i\partial q^j }(p,0)=0, \ \forall p\in \Rr.$$
By integration,
$$\frac{\partial^j g}{\partial q^j}(p,0)=\frac{\partial^j g}{\partial q^j}(0,0)+\int_{t=0}^p\frac{\partial^{1+j}g}{\partial p\partial q^j}(t,0)dt.$$ 
Since $g(0,q)=0$ by construction, we have $\frac{\partial^jg}{\partial q^j}(0,0)=0$, and the first term on the right vanishes. 
Lemma \ref{didii} implies the vanishing of the second one. This completes the proof. 
\end{proof}

There is a similar result for the $G'$-action on $ \mathbb R^2$.
\begin{prop}\label{prop:honi2}
The space $C^{\infty}(\Rr^2\hspace*{-1.5pt} / \hspace*{-1.5pt}G')$ of smooth functions on $\Rr^2\hspace*{-1.5pt} / \hspace*{-1.5pt}G'$ decomposes as follows $$C^{\infty}(\Rr^2\hspace*{-1.5pt} / \hspace*{-1.5pt}G)=C^{\infty}(q^2)+\widetilde{C},$$ 
where $C^{\infty}(q^2)$ is the algebra of smooth even functions depending only on the variable $q^2$, and $\widetilde{C'} \subset C^{\infty}(\Rr^2\hspace*{-1.5pt} / \hspace*{-1.5pt}G')$ is the subalgebra of $G'$-invariant smooth functions vanishing with all their partial derivatives along the line $q=0$. 
\end{prop}

\noindent The following corollary shows that the real analytic case is not interesting, since there are too few $G$-invariant functions.

\begin{coro}
Real analytic functions on  $\mathbb R^2\hspace*{-1.5pt} / \hspace*{-1.5pt}G $  (on  $\mathbb R^2\hspace*{-1.5pt} / \hspace*{-1.5pt}G' $) are functions (resp. even function) which depend only on the variable $q$. 
\end{coro}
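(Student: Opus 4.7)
The plan is to leverage Proposition \ref{honi} (resp. Proposition \ref{prop:honi2}) together with the identity principle for real analytic functions. The natural decomposition from those propositions already isolates the ``function of $q$ alone'' part, so the corollary reduces to showing that the residual part $\widetilde{C}$ (resp. $\widetilde{C'}$) contains no nonzero real analytic function.

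First I would treat the case $G \simeq \mathbb{Z}$. Take an arbitrary real analytic $G$-invariant function $f$ on $\mathbb{R}^2$. Define $h(q) := f(0,q)$, which is real analytic in $q$ as the restriction of $f$ to the line $p = 0$. Set $g(p,q) := f(p,q) - h(q)$; then $g$ is real analytic, $G$-invariant, and satisfies $g(0,q) = 0$. By Proposition \ref{honi} (applied to the smooth $G$-invariant function $f$, with exactly the same choice of $h$ and $g$), $g$ lies in $\widetilde{C}$, so all partial derivatives of $g$ vanish identically on the line $\{q = 0\}$.

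Next I would invoke the real analytic identity theorem: at any point $(p_0, 0)$, the Taylor series of $g$ vanishes, so $g$ is zero on an open neighborhood of $(p_0, 0)$; since $\mathbb{R}^2$ is connected, $g \equiv 0$ on all of $\mathbb{R}^2$. Hence $f(p,q) = h(q)$ depends only on $q$, as required. (If one wished to avoid Proposition \ref{honi} and use Lemma \ref{didii} directly, the same argument goes through: Lemma \ref{didii} gives the infinite-order vanishing of $g$ on $\{q = 0\}$ after subtracting $h$, and then one concludes in the same way.)

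For the $G'$-case, I would observe that $G \subset G'$, so any $G'$-invariant real analytic function is in particular $G$-invariant, hence of the form $f(p,q) = h(q)$ by the first part. The additional symmetry $(p,q) \sim' (p,-q)$ then forces $h(q) = h(-q)$, i.e.\ $h$ is even. Conversely, any real analytic even function of $q$ is trivially $G'$-invariant. The only mildly delicate point is the application of the identity theorem to conclude $g \equiv 0$ globally, but this is immediate from connectedness of $\mathbb{R}^2$, so no serious obstacle arises.
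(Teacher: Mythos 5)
Your proof is correct and follows essentially the same route as the paper: apply the decomposition of Proposition \ref{honi} (infinite-order vanishing of the residual part along $\{q=0\}$) and then the identity principle for real analytic functions to kill that residual part. The only cosmetic difference is that you handle the $G'$ case by reducing to the $G$ case via $G \subset G'$ and the reflection symmetry, whereas the paper cites Proposition \ref{prop:honi2} directly; both are immediate.
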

\begin{proof}[Proof]
A real analytic function on $ \Rr^2 $ which vanishes along a line with all its partial derivatives is zero on $ \Rr^2 $. 
The result now follows from Propositions \ref{honi} and \ref{prop:honi2}.
\end{proof}

\noindent This corollary also means that it is not possible to make such a study in the holomorphic context and study  $\Cc^2\hspace*{-1.5pt} / \hspace*{-1.5pt}G$ instead. There are not enough $G$-invariant holomorphic functions. 
Let us complete our description of smooth functions on  $\Rr^{2}\hspace*{-1.5pt} / \hspace*{-1.5pt}G$:

\begin{prop}\label{refr}
Any function $ f \in C^{\infty}(\Rr^2)^G \simeq C^{\infty}(\Rr^2\hspace*{-1.5pt} / \hspace*{-1.5pt}G) $ reads 
\begin{equation}\label{seriesf}
f(p,q)=\alpha_{0}(q)+\sum_{n\geq 1}\alpha_{n}(q)\cos\left(2n\pi\frac{p}{q}\right)+\sum_{n\geq1}\beta_{n}(q)\sin\left(2n\pi\frac{p}{q}\right),
\end{equation}
where $ \alpha_0$ is a smooth function on $\mathbb R $ and where   $\left(\alpha_{n}(q),\beta_{n}(q)\right)_{n\geq1}$ are smooth functions on $\mathbb R $ that vanish together with all their derivatives at $ q = 0$. 
Both above series, moreover, are absolutely convergent on any open ball. 
\end{prop}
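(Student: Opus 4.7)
The plan is to obtain the $\alpha_n, \beta_n$ as Fourier coefficients of the $q$-periodic function $p \mapsto f(p,q)$, but in a form that manifestly extends smoothly to $q=0$. For $q \in \mathbb{R}$ and $n \in \mathbb{Z}$, I would define
\[ c_n(q) := \int_0^1 f(qu, q)\, e^{-2i\pi n u}\, du, \]
and set $\alpha_0(q) = c_0(q)$, $\alpha_n(q) = 2\,\mathrm{Re}\,c_n(q)$, $\beta_n(q) = -2\,\mathrm{Im}\,c_n(q)$ for $n \geq 1$. Since the integrand is smooth in $(q,u)$, differentiation under the integral sign shows that each $c_n$ (hence each $\alpha_n,\beta_n$) is a smooth function on all of $\mathbb{R}$. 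The substitution $p = qu$ shows that for $q \neq 0$, $c_n(q)$ coincides with the standard Fourier coefficient $\frac{1}{q}\int_0^q f(p,q) e^{-2i\pi n p/q} dp$ of the smooth $q$-periodic function $p \mapsto f(p,q)$, so the pointwise convergence of the series in \eqref{seriesf} to $f(p,q)$ for $q \neq 0$ follows from the classical theorem on uniform Fourier convergence of smooth periodic functions. At $q=0$ one checks directly $c_n(0) = f(0,0)\int_0^1 e^{-2i\pi n u} du$, which is $f(0,0)$ for $n=0$ and $0$ otherwise; this agrees with $f(p,0)$, which is constant in $p$ by the closure of $\sim$.

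Next I would establish flatness of $\alpha_n, \beta_n$ at $q=0$ for $n \geq 1$ by repeated integration by parts in $u$. The crucial observation is that the $G$-invariance $f(p+q,q) = f(p,q)$ implies $\partial_p^j f(q,q) = \partial_p^j f(0,q)$ for every $j$, so that whenever we integrate by parts using $dv = e^{-2i\pi n u} du$ the boundary term vanishes. Performing $2k$ such integrations yields
\[ c_n(q) = \frac{q^{2k}}{(2i\pi n)^{2k}} \int_0^1 \frac{\partial^{2k} f}{\partial p^{2k}}(qu, q)\, e^{-2i\pi n u}\, du. \]
This expresses $c_n$ as $q^{2k}$ times a smooth function of $q$, hence $c_n^{(j)}(0) = 0$ for every $j < 2k$. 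Since $k$ is arbitrary, $c_n$ is flat at $0$, and so are $\alpha_n, \beta_n$ for $n \geq 1$. Observe that $\alpha_0(q) = \int_0^1 f(qu,q)\,du$ is manifestly smooth in $q$ alone, providing the first summand in the decomposition.

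For absolute convergence on an open ball of radius $R$, the same identity gives the estimate
\[ |\alpha_n(q)| + |\beta_n(q)| \leq \frac{4\,R^{2k}}{(2\pi n)^{2k}}\, \sup_{\|(p,q)\| \leq R} \Bigl|\tfrac{\partial^{2k} f}{\partial p^{2k}}(p,q)\Bigr| \]
for every $k \geq 1$, which is summable over $n$ once $k \geq 1$. Taking $k$ large enough also controls derivatives in $q$, so the series converges in all $C^\ell$ norms on compacts of $\mathbb{R}^2 \setminus \{q=0\}$, and by the flatness established above, it extends continuously to $\{q=0\}$ where it reduces to $\alpha_0(0)$.

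The main obstacle is the passage to $q = 0$: one must simultaneously show that the Fourier coefficients, \emph{a priori} defined only on $\{q \neq 0\}$, extend smoothly to the entire real line and are flat there. Both issues are resolved by the change of variable $p = qu$, which replaces the $q$-periodic integral by an integral over the fixed interval $[0,1]$; the integration-by-parts identity above then uses Lemma \ref{didii} implicitly through the $G$-invariance that makes boundary terms vanish.
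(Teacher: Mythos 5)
Your proof is correct, and its skeleton coincides with the paper's: both define the coefficients via the substitution $u=p/q$, turning Fourier integrals over the $q$-dependent period into integrals over a fixed compact interval, so that smoothness of $\alpha_n,\beta_n$ on all of $\Rr$ (including at $q=0$) follows from differentiation under the integral sign, and both obtain decay in $n$ by integration by parts in $u$, with boundary terms killed by periodicity. Where you genuinely diverge is the flatness of $\alpha_n,\beta_n$ at $q=0$: the paper deduces it from the decomposition $C^{\infty}(\Rr^2\hspace*{-1.5pt} / \hspace*{-1.5pt}G)=C^{\infty}(q)+\widetilde{C}$ of Proposition \ref{honi} (itself resting on Lemma \ref{didii}), treating separately functions of $q$ alone and functions flat along $q=0$, whereas you read flatness directly off the integration-by-parts identity
\begin{equation*}
c_n(q)=\frac{q^{2k}}{(2i\pi n)^{2k}}\int_0^1 \frac{\partial^{2k} f}{\partial p^{2k}}(qu,q)\, e^{-2i\pi n u}\, du,
\end{equation*}
which exhibits $c_n$ as $q^{2k}$ times a smooth function for every $k$. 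This is a nice economy: a single identity yields simultaneously the flatness at $q=0$ and the uniform $O(n^{-2k})$ bounds (essentially the content of the paper's Lemma \ref{lem:alphank}), and your argument does not need Proposition \ref{honi} at all --- only the $G$-invariance of $f$ and of its $p$-derivatives, which is precisely what makes the boundary terms vanish; note that this invariance is used directly, so Lemma \ref{didii} is not really invoked, even implicitly, contrary to your closing remark. Two small points to tidy up: in your uniform estimate the supremum should be taken over the compact set $\{(qu,q)\mid |q|\le R,\ u\in[0,1]\}$, which is contained in the ball of radius $R\sqrt{2}$ rather than the ball of radius $R$; and the constancy of $p\mapsto f(p,0)$, which you use to identify the series at $q=0$, is exactly invariance under the closed relation $\sim$, as you correctly say --- the paper leaves this verification implicit.
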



%

\begin{proof}[Proof of Proposition \ref{refr}]
\noindent {\it{Step 1.}} Let us construct functions $\alpha_n, \beta_n $ that satisfy (\ref{seriesf}).  
For $q\in \Rr^*$ fixed, the function $p \mapsto f(p,q)$ is smooth and $q$-periodic. Its Fourier transform with respect to the variable $ p $ reads 
$$f(p,q)=\alpha_{0}(q)+\sum_{n\geq 1}\alpha_{n}(q)\cos\left(2n\pi\frac{p}{q}\right)+\sum_{n\geq1}\beta_{n}(q)\sin\left(2n\pi\frac{p}{q}\right) $$
\noindent Let us use the change of variables $u=\frac{p}{q}$ to find a friendly expression of the Fourier coefficients:
$$
\begin{array}{rcll}
\alpha_{0}(q)&=& \frac{1}{q}\int_{-q\hspace*{-1.5pt} / \hspace*{-1.5pt}2}^{q\hspace*{-1.5pt} / \hspace*{-1.5pt}2}f(p,q)dp,  & \\
& =& \int_{-1/2}^{1/2}f(uq,q)du & (*_0), \\
& \hbox{and} & & \\
\alpha_n(q)&=&\frac{2}{q}\int_{-q/2}^{q/2}f(p,q)\cos\left(2n\pi\frac{p}{q}\right)dp,& \\
&=& 2 \int_{-1/2}^{1/2}f(uq,q)\cos\left(2n\pi u \right)du, &  (*_n) \\
& \hbox{and} & & \\ \beta_n(q)&=&\frac{2}{q}\int_{-q/2}^{q/2}f(p,q)\sin\left(2n\pi\frac{p}{q}\right)dp,& \\
 &=&  2\int_{-1/2}^{1/2}f(uq,q)\sin\left(2n\pi u \right)du.& (**_n)
\end{array}
$$
Formulas $(*_0,*_n,**_n) $ still make sense for $q=0$, so that the functions $\alpha_n, \beta_n $ are defined on $ \mathbb R$. They are  smooth as integrals of smooth functions on a compact interval.

\noindent {\it{Step 2.}}  Let us show that for all $ n \geq 1 $, the Fourier coefficients $\alpha_n(q),\beta_n(q)\in C^{\infty}(\Rr,\Rr)$ are smooth functions which are zero with all their derivatives at $q=0$.
\begin{enumerate}
    \item[A.] If the function $f(p,q)$ only depends on $q$, then $\alpha_n=\beta_n=0$ for all $n \geqslant 1$, while $\alpha_0(q)=f(p,q) $ and the result is obvious. 
    \item[B.] If $f(p,q) $ belongs to $ \tilde{C}$, it vanishes with all its partial derivative along the line $q=0 $. A simple computation using ($*_n,**_n$) then gives the result.  
\end{enumerate} 
By Proposition \ref{honi}, any function in $C^\infty(\Rr^2\hspace*{-1.5pt} / \hspace*{-1.5pt}G) $ is a sum of a function of the  type A with a function of the second type B. The result therefore holds.

\noindent {\it{Step 3.}} Let us show that the series in (\ref{seriesf}) are absolutely convergent on any relatively compact subset $\mathcal U$ of $\Rr^2 $.  Let us integrate by part the right hand side of ($*_n$)  twice. Since $u \mapsto f(uq,q) $ is periodic of period $1$, all boundary term disappear, and we obtain
\begin{equation}\label{eq:alphaqk}  \alpha_n(q) = \frac{-1}{n^2} \frac{q^2}{4 \pi^2} \int_{-1/2}^{1/2} \frac{\partial^2 f(uq,q)}{ \partial q^2} {\rm cos}(2n \pi u) du. \end{equation}
Let  $K= \frac{B^2}{2 \pi^2} N$, where $B,N$ are the greatest possible value of $|q|$ and $|\frac{\partial^2 f(p,q)}{ \partial q^2}|$ on $\mathcal U $. We have $|\alpha_n(q)| < \tfrac{K}{n^2} $. The same applies to $(**_n) $.
Since the series $ \sum 1/n^2$ converges, both series in \eqref{seriesf} are absolutely convergent. \end{proof} 
 
In fact, the inequality \eqref{eq:alphaqk} can be made much stronger - and it will be needed later.

\begin{lem}
\label{lem:alphank}
For every $i \geq 1$, $k \geq 1 $ and $M >0$,  there exists a contant $K $ that does not depend on $n$ such that:
$$ \left|  \frac{\alpha_n^{(i)}(q)}{q^k}  \right|  \leq \frac{K}{n^k} \hbox{ and } \left|  \frac{\beta_n^{(i)}(q)}{q^k}  \right|  \leq \frac{K}{n^k}  $$
for all $-M<q<M $
\end{lem}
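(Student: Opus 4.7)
The plan is to revisit the integration-by-parts derivation of \eqref{eq:alphaqk}, but to carry it out $N := k+i$ times instead of twice, and to track carefully how the $i$ derivatives in $q$ interact with the resulting powers of $q$. The starting point is
$$\alpha_n^{(i)}(q) \;=\; 2 \int_{-1/2}^{1/2} \partial_q^i \tilde f(u,q)\, \cos(2n\pi u)\,du, \qquad \tilde f(u,q) := f(uq,q),$$
obtained by differentiating under the integral sign in $(\ast_n)$, which is legitimate because $\tilde f$ is smooth and $u$ lies in a compact interval. The crucial property is $\tilde f(u+1,q) = f(uq+q,q) = f(uq,q) = \tilde f(u,q)$ by $G$-invariance: $\tilde f$ and each $G_i(u,q) := \partial_q^i \tilde f(u,q)$, together with all of their $u$-derivatives, are $1$-periodic in $u$.

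I would then integrate the above integral by parts $N$ times with respect to $u$. All boundary terms at $u = \pm\tfrac12$ vanish at every intermediate step: those involving $\sin(2n\pi u)$ vanish because $\sin(\pm n\pi) = 0$, while those involving $\cos(2n\pi u)$ vanish because the periodic function $\partial_u^j G_i(\cdot,q)$ takes the same value at $u=\tfrac12$ and $u=-\tfrac12$. One gets
$$\alpha_n^{(i)}(q) \;=\; \frac{\pm\,2}{(2\pi n)^N}\int_{-1/2}^{1/2} \partial_u^{N} G_i(u,q)\, \phi_N(u)\,du$$
for some $\phi_N \in \{\cos(2n\pi u),\sin(2n\pi u)\}$; it then remains to bound $\partial_u^N G_i(u,q)$ by a constant multiple of $|q|^k$.

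For this, Schwarz's theorem and the iterated chain rule $\partial_u^N \tilde f(u,q) = q^N \partial_p^N f(uq,q)$ yield
$$\partial_u^N G_i(u,q) \;=\; \partial_q^i\!\left[q^N\, \partial_p^N f(uq,q)\right],$$
and the Leibniz expansion exhibits this as a finite sum of terms of the form $c\, q^{N-j}\, u^l\, (\partial_p^{N+l}\partial_q^{i-j-l} f)(uq,q)$ with $0\le l\le i-j$ and $0\le j\le i$. Because $N=k+i\ge i$, every exponent $N-j$ lies in $\{k,\ldots,N\}$, so $|q|^{N-j}\le M^{i}|q|^k$ on $(-M,M)$; the remaining partial derivatives of $f$ are bounded on the compact set $\{(uq,q): |u|\le\tfrac12,\, |q|\le M\}$. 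This gives $|\partial_u^N G_i(u,q)|\le C|q|^k$ with $C=C(f,i,k,M)$, whence $|\alpha_n^{(i)}(q)|\le \tfrac{2C}{(2\pi n)^{k+i}}|q|^k \le \tfrac{K}{n^k}|q|^k$, as desired. The proof for $\beta_n^{(i)}$ is identical with $\sin$ in place of $\cos$.

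The only conceptual obstacle is to integrate by parts exactly $N = k+i$ times rather than $k$: the $i$ derivatives in $q$, via Leibniz, consume up to $i$ of the $N$ factors of $q$ produced, leaving the clean factor $|q|^k$ required by the statement. Everything else is routine bookkeeping, plus the observation already used in Step 3 of Proposition~\ref{refr} that $1$-periodicity of $\tilde f$ in $u$ kills all boundary terms throughout the iterated integrations by parts.
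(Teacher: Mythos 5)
Your proof is correct, and at heart it follows the same strategy as the paper's: differentiate the coefficient formula $(\ast_n)$ under the integral sign, integrate by parts in $u$ using the $1$-periodicity of $u\mapsto f(uq,q)$ to kill all boundary terms, and bound the resulting integrand by a maximum over a compact set. The difference lies in the execution, and it is a difference that matters. The paper's proof consists of asserting the identity \eqref{eq:alphaqk2}, which displays a clean prefactor $q^k/(2\pi n)^k$ in front of an integral involving derivatives of $f$ of total order $k+i$; taken literally, this is not what ``successive integration by parts'' yields. Integrating by parts $k$ times gives $\partial_u^k\partial_q^i\bigl[f(uq,q)\bigr]=\partial_q^i\bigl[q^k(\partial_p^kf)(uq,q)\bigr]$, and when the outer $\partial_q^i$ is expanded by Leibniz it hits the factor $q^k$, producing cross-terms that carry only $q^{k-j}$ for $1\le j\le\min(i,k)$. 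These terms can in fact be absorbed, but only via a further argument the paper does not make (for instance by Lemma~\ref{didii}: every $\partial_p^a\partial_q^b f$ with $a\ge 1$ is again $G$-invariant-type and vanishes to infinite order along $q=0$). Your decision to integrate by parts $N=k+i$ times is exactly the repair: Leibniz can consume at most $i$ of the $N$ powers of $q$ produced, so every term still carries at least $q^{N-i}=q^k$, with the remaining derivative factors bounded on the compact set $\{(uq,q):|u|\le\tfrac12,\ |q|\le M\}$; you even obtain the stronger decay $n^{-(k+i)}$, which of course implies the stated $n^{-k}$ bound. The only quibble is cosmetic: your inequality $|q|^{N-j}\le M^{i}|q|^k$ should read $|q|^{N-j}\le\max(1,M)^{i}|q|^k$ (when $M<1$ and $j=i$ the exponent $N-j-k$ is zero), which changes nothing. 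So your proposal is not merely a rederivation: it supplies the bookkeeping that the paper's displayed formula glosses over, and in doing so closes a genuine imprecision in the paper's own argument.
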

\begin{proof}
 Using successive integrations by part, we obtain
\begin{equation}\label{eq:alphaqk2}  \alpha_n^{(i)}(q) = \frac{1}{n^k} \frac{q^k}{(2 \pi)^k} \int_{-1/2}^{1/2} \frac{\partial^{k+i} f(uq,q)}{ \partial q^{k+i}} {\mathrm cossin_k}(2n \pi u) du. \end{equation}
where $ {\rm cossin_k}$ is ${\mathrm{ cos}},{\mathrm{ sin}}, -{\mathrm{ cos}}, -{\mathrm{ sin}} $ for $k \equiv 0,1,2,3  $ modulo $4$ respectively. Since $|{\rm cossin_k} | \leq 1$,
$$ K = \frac{ 1 }{(2\pi)^k} {\mathrm{Max}} \left\{ \left|  \frac{\partial^{k+i} f(uq,q)}{ \partial q^{k+i}}\right|  , {(u,q) \in [0,1] \times [-M,M]} \right\} $$
This maximum exists by compactness. This concludes the proof for the functions $\alpha_n^{(i)} $. The proof is similar for the functions $\beta_n^{(i)} $.
\end{proof}

Here is an immediate consequence of Lemma \ref{lem:alphank}:

\begin{prop}
Both series in Equation \eqref{seriesf} are uniformly convergent on any compact set, together with all their partial derivatives.
\end{prop}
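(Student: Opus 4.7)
The plan is to reduce everything to a Leibniz-and-Faà-di-Bruno expansion of a single term of the series and then control it via Lemma \ref{lem:alphank}. Fix a compact set $\mathcal U \subset \mathbb R^2$ and integers $a,b \geq 0$. I want to bound the general term
$$ T_n(p,q) := \partial_p^a \partial_q^b \bigl[\alpha_n(q) \cos(2n\pi p/q)\bigr] $$
uniformly on $\mathcal U \cap \{q \neq 0\}$ in a manner that is summable in $n$; the analogous argument treats $\beta_n \sin$.

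First I would expand by Leibniz in $q$ and write $T_n = \sum_{j=0}^{b} \binom{b}{j} \alpha_n^{(j)}(q)\, \partial_p^a \partial_q^{b-j} \cos(2n\pi p/q)$. Then I would compute $\partial_p^a \partial_q^{c} \cos(2n\pi p/q)$ by applying Faà di Bruno to the inner function $u(q) = p/q$, whose $s$-th $q$-derivative is $(-1)^s s!\, p/q^{s+1}$. The upshot is that $\partial_p^a\partial_q^{c}\cos(2n\pi p/q)$ is a finite linear combination (with numerical constants independent of $n$, $p$, $q$) of terms of the form
$$ n^{m+a}\, p^{m}\, q^{-(c+m+a)} \cdot \mathrm{cossin}(2n\pi p/q), \qquad 0 \le m \le c. $$
Thus each term contributing to $T_n$ has the shape $C\, n^{m+a}\, p^{m}\, q^{-(b-j+m+a)}\, \alpha_n^{(j)}(q)\, \mathrm{cossin}(2n\pi p/q)$ with $0\le j \le b$ and $0 \le m \le b-j$.

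Now comes the key estimate. Pick $k := a+2b+2$ and let $M>0$ be an upper bound for $|p|$ and $|q|$ on $\mathcal U$. Apply Lemma \ref{lem:alphank} (for $j \geq 1$; the case $j=0$ follows from the very same integration-by-parts identity \eqref{eq:alphaqk2}) to obtain $|\alpha_n^{(j)}(q)| \le K\, |q|^{k}/n^{k}$ on $[-M,M]$ for a constant $K$ independent of $n$. Inserting this bound and using $|p|,|q| \le M$ and $|\mathrm{cossin}| \le 1$ gives
$$ \bigl| C\, n^{m+a}\, p^{m}\, q^{-(b-j+m+a)}\, \alpha_n^{(j)}(q)\, \mathrm{cossin}(2n\pi p/q) \bigr| \;\le\; C K\, M^{\,m + (k - (b-j+m+a))}\, n^{\,m+a-k}. $$
Since $k - (b-j+m+a) = b + j + 2 \ge 0$, the $M$-factor is bounded; since $k - (m+a) = 2b - m + 2 \ge b+2 \ge 2$, the $n$-factor is at most $1/n^{2}$. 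As there are finitely many such summands (bounded in number by a function of $a$ and $b$ only), we conclude $|T_n(p,q)| \le C'/n^{2}$ uniformly on $\mathcal U \cap \{q\neq 0\}$.

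The main obstacle is really bookkeeping: Faà di Bruno spawns many indices, and one has to verify that the worst powers of $1/q$ and of $n$ that can occur are still dominated by the gain of $q^{k}/n^{k}$ in Lemma \ref{lem:alphank} once $k$ is chosen large enough. Finally, since $\alpha_n$ and $\beta_n$ vanish to infinite order at $q=0$, each term $\alpha_n(q)\cos(2n\pi p/q)$ and $\beta_n(q)\sin(2n\pi p/q)$ extends smoothly across $\{q=0\}$ as the zero function together with all its partial derivatives; the bound $C'/n^{2}$ therefore holds on all of $\mathcal U$. By the Weierstrass M-test the differentiated series converges uniformly on $\mathcal U$, which completes the proof for arbitrary $a,b$.
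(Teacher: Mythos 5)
Your proof is correct and is essentially the paper's argument: the paper states this proposition without proof, as an ``immediate consequence'' of Lemma \ref{lem:alphank}, and your Leibniz--Fa\`a di Bruno bookkeeping supplies exactly the details that make that deduction work (the negative powers of $q$ and positive powers of $n$ produced by differentiating $\cos(2n\pi p/q)$ are absorbed by the $q^k/n^k$ gain in the lemma, with $k$ chosen large relative to the order of differentiation). One harmless arithmetic slip: $k-(b-j+m+a)=b+j-m+2$, not $b+j+2$; since $m\le b-j$ this is still $\ge 2$, so all of your conclusions stand.
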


\noindent For the group action $ G'$, a similar argument leads to:

\begin{prop}\label{refr1}
A function $f\in C^{\infty}(\Rr^{2}\hspace*{-1.5pt} / \hspace*{-1.5pt}G')$ reads $$f(p,q)=\alpha_{0}(q)+\sum_{n\geq 1}\alpha_{n}(q)\cos\left(2n\pi\frac{p}{q}\right)+\sum_{n\geq1} \beta_{n}(q)\sin\left(2n\pi\frac{p}{q}\right),$$
where the functions $\left(\alpha_{n},\beta_{n}\right)_{n\geq1}$ are smooth and vanish with all their derivatives at $ q = 0 $. Moreover the functions $ \alpha_{n} $ are even functions and the functions $ \beta_{n} $ are odd functions for all $n\in \Nn$.
The above series, moreover, is absolutely convergent on any relatively compact open set, and so are all its partial derivatives.
\end{prop}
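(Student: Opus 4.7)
The plan is to deduce Proposition \ref{refr1} from Proposition \ref{refr} by exploiting the inclusion $G \subset G'$ together with the extra symmetry $(p,q) \mapsto (p,-q)$ that distinguishes $G'$ from $G$.

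First I would observe that a $G'$-invariant smooth function is, in particular, $G$-invariant. Therefore Proposition \ref{refr} applies verbatim: $f$ admits a Fourier expansion
\begin{equation*}
f(p,q) = \alpha_0(q) + \sum_{n \geq 1} \alpha_n(q) \cos\!\left(2n\pi \tfrac{p}{q}\right) + \sum_{n \geq 1} \beta_n(q) \sin\!\left(2n\pi \tfrac{p}{q}\right),
\end{equation*}
where $\alpha_0$ is smooth on $\mathbb{R}$ and $(\alpha_n, \beta_n)_{n \geq 1}$ are smooth and flat at $q=0$. The absolute and uniform convergence of the series and of all its partial derivatives on compact sets is inherited directly from Proposition \ref{refr} and Lemma \ref{lem:alphank}; no new estimate needs to be proved.

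Next I would use the additional generator of $G'$. The $G'$-invariance of $f$ implies $f(p,-q) = f(p,q)$, so plugging $-q$ into the expansion and using the parities $\cos(-x) = \cos(x)$, $\sin(-x) = -\sin(x)$ yields
\begin{equation*}
f(p,q) = \alpha_0(-q) + \sum_{n \geq 1} \alpha_n(-q) \cos\!\left(2n\pi \tfrac{p}{q}\right) - \sum_{n \geq 1} \beta_n(-q) \sin\!\left(2n\pi \tfrac{p}{q}\right).
\end{equation*}
For each fixed $q \neq 0$, the function $p \mapsto f(p,q)$ is smooth and $q$-periodic, so its Fourier coefficients are uniquely determined. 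Comparing the two expansions term by term gives $\alpha_n(-q) = \alpha_n(q)$ and $\beta_n(-q) = -\beta_n(q)$ for all $n \geq 0$ and all $q \neq 0$, hence by continuity on all of $\mathbb{R}$. This is exactly the claim that the $\alpha_n$ are even and the $\beta_n$ are odd.

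There is no real obstacle: the argument is a short post-processing of Proposition \ref{refr}. The only small point to be careful about is invoking uniqueness of Fourier coefficients pointwise in $q \neq 0$ (rather than, say, trying to match the expansions globally), after which continuity of $\alpha_n$ and $\beta_n$ extends the parity relations to $q = 0$ trivially (both sides vanish there).
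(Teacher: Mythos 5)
Your proof is correct. The paper gives no written proof of Proposition \ref{refr1} at all: it only remarks that ``a similar argument'' to that of Proposition \ref{refr} applies, which implicitly means rerunning the whole construction for a $G'$-invariant $f$ --- defining $\alpha_n,\beta_n$ by the same integral formulas $(*_0),(*_n),(**_n)$, with the parity statements then read off from those formulas: since $f(p,-q)=f(p,q)$, the substitution $u\mapsto -u$ in the integrals gives $\alpha_n(-q)=\alpha_n(q)$ and $\beta_n(-q)=-\beta_n(q)$. Your route is organized differently and, I would say, better: you exploit $G\subset G'$ to apply Proposition \ref{refr} as a black box (so all smoothness, flatness-at-$0$ and convergence claims are inherited with no new estimates), and you isolate the single new ingredient of the $G'$ case, namely the symmetry $(p,q)\mapsto(p,-q)$, extracting the parities by uniqueness of Fourier coefficients at each fixed $q\neq 0$ and then by continuity at $q=0$. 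The two arguments are at bottom the same piece of Fourier analysis --- uniqueness of the coefficients is exactly what the integral formulas express --- but yours avoids any repetition of the proof of Proposition \ref{refr}. The one step that needs (and gets) a word of justification is the uniqueness claim itself: it is legitimate because the series converges uniformly in $p$ for fixed $q\neq 0$ (Lemma \ref{lem:alphank}), so one may integrate term by term against $\cos\left(2m\pi\frac{p}{q}\right)$ and $\sin\left(2m\pi\frac{p}{q}\right)$ to recover the coefficients. With that point flagged, the argument is complete.
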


\section{Real Du Val symplectic resolutions}\label{sec:DuVal}

\noindent In this section, we mainly recall from \cite{sym} and adapt to the real case the construction of the symplectic resolution of the Du Val singularities $ A_{2k} $ and $ D_{2k+1} $. 

First, we define a Poisson structure on the varieties $ A_{2k} $ and $ D_{2k+1} $.
For any polynomial function $ F(x,y,z)$ on $\Rr^3 $, a Poisson bracket on $\Rr^3 $ is given by \cite{Pichereau2}:
\begin{equation}
 \label{eq:Pichereau} \{x,y\} = \frac{\partial F}{\partial z}, \{y,z\} = \frac{\partial F}{\partial x}, \{z,x\} = \frac{\partial F}{\partial y} .\end{equation}
This Poisson bracket admits $F$ as a Casimir function, and therefore descends to the quotient $\tfrac{\mathbb R[x,y,z]}{(F)} $. Unlike in the complex case, the latter algebra may not be easily identified with polynomial functions on the set $F=0$ (for instance, for $ F= x^2+y^2+z^2$, the zero locus is a point). 
It only makes sense when the quotient $\tfrac{\mathbb R[x,y,z]}{(F)} $ injects itself into the algebra of real valued functions on $\{F =0\} $. This is the case, in particular, for $F= x^2 + y^2 - z^{2k}$. We denote by $ A_{2k}$ the set
$$A_{2k} =  \{(x,y,z) \in \Rr^3 \,  \mid \,  x^2 + y^2 = z^{2k}\} .$$
Formula \eqref{eq:Pichereau} equips functions on $A_{2k} $ with the following Poisson structure:
\begin{equation}\label{hichem2}
\left\{x,y\right\}=-2kz^{2k-1},\quad \left\{y,z\right\}=2x,\quad \left\{z,x\right\}=2y.
\end{equation}

Let us study the successive blowing up at the origin (the only singular point) of $A_{2k} $ and the behavior of the above Poisson structure.

Let us first describe the minimal resolution of $ A_{2k}$.
\begin{enumerate}
    \item The blowup $\varphi:Z\to A_{2k}$ at the origin $(0,0,0) \in A_{2k}$ is covered by $3$ open charts. 
     \begin{enumerate}
         \item In two of them, the strict transform  of $A_{2k}$ is smooth. 
         \item In the last one, the $z$-chart, with coordinates $u, v, z$, the morphism is given by
$\varphi_1:  Z_1 \to \Rr^3 $ defined by $x = uz, y = vz, z = z$, and the inverse image of $A_{2k}$
is defined by $u^2z^2+v^2z^2=z^{2k}$ so that  the strict transform is $ u^2+v^2 =z^{2(k-1)} $.
As a consequence, the strict transform  on this chart is isomorphic to~$ A_{2k-2} $.
     \end{enumerate}
    \item We then blow-up $ A_{2k-2}$, and repeat the procedure
    \item[]
    \item[$\vdots$]
    \item[]
    \item[ (k)]  Therefore, after $k$-successive blowups at $(0,0,0)$: $\varphi_k:  Z_k \to Z_{k-1}$, the 
strict transform is given by $x^2+y^2=1$ with $z$ a free variable. It is therefore smooth. 
\end{enumerate}
At this point, we obtain a smooth affino-projective variety, called the \emph{(real) Du Val resolution of $A_{2k} $}. 

Let us describe the behaviour of the Poisson structure under the successive blow-up leading to the Du Val resolution. 
\begin{enumerate}
    \item The Poisson structure \eqref{hichem2} pulls-back to a Poisson structure on the strict transform of $ A_{2k}$ under the first blow-up $ A_{2k}$.
    \begin{enumerate}
        \item In the two first affine charts, the pulled-back Poisson structure is symplectic.
        \item In the third one, (where the strict transform is $\simeq A_{2k-2}$), the pulled-back Poisson structure is of the form \eqref{hichem2} with $k$ being replaced by $ k-1$.
    \end{enumerate}
     \item The same applies then to the blow-up of $ A_{2k-2}$
    \item[]
    \item[$\vdots$]
    \item[]
    \item[ (k)] Therefore, after $k$-successive blowups at $(0,0,0)$, the pulled-back Poisson structure, which is still well-defined and has no singularity, is given by
\begin{equation}\label{hic23}
\left\{x,y\right\}=0,\quad \left\{y,z\right\}=2x,\quad \left\{z,x\right\}=2y.
\end{equation}
which is symplectic.
\end{enumerate}



We have therefore obtained by successive blowing a symplectic resolution of $A_{2k}$, which we denote by $(Z_k,\pi_k)$. In conclusion:

\begin{prop}
\label{prop:sympresolA2k}
For all $k \geq 2$:
\begin{enumerate}
    \item 
The map $\varphi: Z_k \to A_{2k} $ is onto, and restricts to a diffeomorphism $ Z_k \backslash \varphi^{-1}(\{0\}) \simeq A_{2k} \backslash \{ 0\} $
    \item 
The pull-back of the Poisson structure \eqref{hichem2} to the Du Val resolution of $ \varphi : Z_k \to A_{2k}$ is a symplectic structure.
\end{enumerate}
Equivalently, $(Z_k,\varphi,\pi_k)$ is a symplectic resolution of $ A_{2k}$. 
 \end{prop}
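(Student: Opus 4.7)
The plan is to prove both statements by induction on $k$, leveraging the recursive chart description given just above the proposition. At each blowup $\varphi_i \colon Z_i \to Z_{i-1}$, two of the three affine charts give smooth strict transforms on which one verifies directly that the pulled-back Poisson bivector is nowhere-vanishing, and the third ($z$-)chart reduces the situation to an $A_{2(k-1)}$-singularity equipped with a bracket of the form \eqref{hichem2} at the next level. I would fix this framework first and then handle (1) and (2) separately.

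For (1), I would observe that each blowup at the origin is surjective and an isomorphism away from the exceptional divisor sitting over $(0,0,0)$. Since $(0,0,0)$ is the unique singular point at each stage (the two "good" charts are smooth and only the $z$-chart contains a new singular point, located at its origin), composing the $\varphi_i$ yields $\varphi \colon Z_k \to A_{2k}$ surjective and a diffeomorphism on $Z_k \setminus \varphi^{-1}(\{0\})$.

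For (2), the key computation is the blowup substitution $x = uz,\, y = vz$ applied to \eqref{eq:Pichereau}: using that $F = x^2 + y^2 - z^{2k}$ factors as $z^2(u^2 + v^2 - z^{2(k-1)})$ on the $z$-chart, one checks that the pulled-back bracket, after the expected $z^2$ cancellation coming from pulling back a Casimir, is exactly \eqref{hichem2} with $k-1$ in the new coordinates $(u,v,z)$. In the other two charts I would write the strict transform as a graph and verify in one line that the bivector does not vanish. After $k$ iterations the strict transform is the cylinder $\{x^2+y^2 = 1\}$ with $z$ a free coordinate, and the bracket \eqref{hic23} has $x^2+y^2$ as a Casimir by \eqref{eq:Pichereau}; its restriction to the symplectic leaf $\{x^2+y^2 = 1\}$ is manifestly nowhere-vanishing, hence symplectic on this $2$-dimensional manifold.

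The main obstacle is not any single calculation but the bookkeeping: one must check that the chart-wise bivectors agree on overlaps so that they glue into a single smooth bivector $\pi_k$ on $Z_k$, and that the recursive form obtained in the $z$-chart exactly matches \eqref{hichem2} with $k$ decremented, so that the induction closes. Both points are essentially guaranteed by the naturality of pull-back and the chart-independence of formula \eqref{eq:Pichereau}, but a careful inductive verification of the Poisson coefficients in each affine chart is what makes the proof rigorous rather than merely plausible.
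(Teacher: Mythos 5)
Your proposal is correct and takes essentially the same route as the paper: the paper's justification for this proposition is precisely the chart-by-chart description of the successive blowups given right before it (two affine charts where the strict transform is smooth and the pulled-back bracket is symplectic, and the $z$-chart where the substitution $x=uz$, $y=vz$ turns \eqref{hichem2} into the same bracket with $k-1$, until the final cylinder $x^2+y^2=1$ carrying the nowhere-vanishing bracket \eqref{hic23}). Your induction on $k$, with the gluing and surjectivity bookkeeping made explicit, is a formalized version of exactly that recursion.
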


\noindent The next point is easy to verify and will be important. Notice that, now, it is only true on $\Rr$, and not valid on $\Cc$.

\begin{prop}\label{coco3}
The pull back of the rational functions $\frac{x}{z^{k}}$ and $\frac{y}{z^{k}}$ by the symplectic Du Val resolution $Z_k\rightarrow A_{2k}$ are smooth functions on $Z_k$.
\end{prop}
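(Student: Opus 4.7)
My plan is to verify smoothness chart-by-chart on $Z_k$, separating what happens off the exceptional divisor and on it.

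On $Z_k \setminus \varphi^{-1}(0)$, the map $\varphi$ is a diffeomorphism onto $A_{2k} \setminus \{0\}$. The equation $x^2 + y^2 = z^{2k}$ together with the positivity of sums of real squares forces $z \neq 0$ everywhere on $A_{2k} \setminus \{0\}$: this is precisely where working over $\Rr$, rather than $\Cc$, is essential. Thus $x/z^k$ and $y/z^k$ are smooth rational functions there, and their pull-backs to $Z_k \setminus \varphi^{-1}(0)$ are smooth.

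Near the exceptional divisor I would proceed using the explicit iterated-blowup description of $Z_k$ recalled above. Two families of charts are relevant. First, in the $z$-chart obtained after all $k$ successive blowups, with coordinates $(u_k, v_k, z)$, relation $u_k^2 + v_k^2 = 1$, and resolution map $(u_k, v_k, z) \mapsto (u_k z^k,\, v_k z^k,\, z)$, we have immediately $\varphi^*(x/z^k) = u_k$ and $\varphi^*(y/z^k) = v_k$, which are smooth coordinates. Second, in each ``smooth'' auxiliary chart arising at the $(i{+}1)$-th blowup for $0 \leq i \leq k-1$, I would show by induction on $i$ that local coordinates $(u, \tilde v, \tilde w)$ can be chosen (with $u = x,\ \tilde v = y/x,\ \tilde w = z/x$ at stage $i = 0$, and $u = u_i,\ \tilde v = v_i/u_i,\ \tilde w = z/u_i$ at later stages) so that
\begin{equation*}
 x = u^{i+1}\, \tilde w^{\,i}, \qquad y = u^{i+1}\, \tilde v\, \tilde w^{\,i}, \qquad z = u\, \tilde w,
\end{equation*}
while the strict transform of $A_{2k}$ carries the equation $1 + \tilde v^2 = u^{2(k-i-1)}\, \tilde w^{2(k-i)}$. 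Since $1 + \tilde v^2 > 0$ over $\Rr$, neither $u$ (when $i < k-1$) nor $\tilde w$ can vanish on the strict transform; hence
\begin{equation*}
 \varphi^{\!*}\!\left(\tfrac{x}{z^k}\right) = \tfrac{1}{u^{k-i-1}\, \tilde w^{\,k-i}}, \qquad \varphi^{\!*}\!\left(\tfrac{y}{z^k}\right) = \tfrac{\tilde v}{u^{k-i-1}\, \tilde w^{\,k-i}}
\end{equation*}
are smooth on the chart. The symmetric ``$v$-auxiliary'' chart (where the roles of $u$ and $\tilde v$ are swapped) is treated identically.

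The main obstacle will be the bookkeeping: tracking the coordinate expressions of $x, y, z$ through the $k$ iterations and verifying the inductive strict-transform equation at each stage. But the crucial mechanism — the strict positivity of $1 + \tilde v^2$ over $\Rr$, which cancels the would-be poles in $1/z^k$ along the strict transform — is precisely what fails over $\Cc$ and explains why the proposition is specific to real geometry.
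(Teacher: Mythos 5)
Your proof is correct and follows essentially the same route as the paper's: both work chart-by-chart through the $k$ successive blowups, using real positivity (your $1+\tilde v^2>0$, the paper's ``the pull-back of $z$ cannot be equal to $0$'') to kill the poles in the auxiliary charts, while along the chain of $z$-charts the function $x/z^k$ degrades to $x/z^{k-1}$ at each stage until it becomes a coordinate function on the final cylinder. The only difference is presentational: the paper organizes this as an induction replacing $k$ by $k-1$, whereas you unroll the induction into explicit composite coordinate formulas for each chart.
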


\begin{proof}
 The unique point where the rational function $\frac{x}{z^{k}}$ is not defined is  $x=y=z=0$.
Let us see the behaviour under the successive blowups that lead to the Du Val resolution:
\begin{enumerate}
    \item 
For the blowing-up of the singularity $ A_{2k} $ at the origin, the following phenomenon occurs:
    \begin{enumerate}
        \item in the two first affine charts (the $x$-chart and the $y$-chart), the pull-back is a smooth function, because the pull-back of $z$ can not be equal to $0$,
        \item in the last one, the restriction to the strict transform (which is isomorphic to $A_{2k-2} $) of the pull back of $\frac{x}{z^{k}}$ is the function $\frac{x}{z^{k-1}}$.
    \end{enumerate}
    \item[] We are therefore brought back to the previous function, upon changing $ k $ by $ k-1 $. We can then repeat the procedure.
    \item[$\vdots$]
    \item[]
    \item[ (k)] At the last step, the strict transform is given by $ x^{2}+y^{2}-1=0$  with $z$ being free, and the  back pull of the function $x/z^k $ is simply the function $x$, which is smooth.
\end{enumerate}
This completes the proof.
\end{proof}
%

\noindent Let us define two sequences $(x_n)_{n \geq 1}$ and $ (y_n)_{n \geq 1}$ of rational functions on $A_{2k} \subset \Rr^3$ by $x_1(x,y,z):=x$, $y_1(x,y,z):=y$ for all $(x,y,z) \in A_{2k}$ and 
\begin{equation}
\label{eq:xnyn}
  \left\{\begin{array}{rcl}
              x_{n}=\frac{x_{1}}{z^{k}}x_{n-1}-\frac{y_{1}}{z^{k}}y_{n-1}\\
						  y_{n}=\frac{x_{1}}{z^{k}}y_{n-1} + \frac{y_{1}}{z^{k}}x_{n-1}.
 \end{array}\right.
\end{equation}
From Proposition \ref{coco3}, it follows that 

\begin{coro}\label{coronYn}
For all $n\in \Nn $ the pull back $X_n := \varphi^* x_n$ and $Y_n = \varphi^* y_n $ of the functions $x_n,y_n$ defined in (\ref{eq:xnyn}) to the Du Val resolution $ \varphi: Z_k \to A_{2k}$ are smooth functions on $Z_k$.
\end{coro}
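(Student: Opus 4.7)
The statement is a straightforward consequence of Proposition \ref{coco3} combined with the recursive definition \eqref{eq:xnyn}. The plan is to prove it by induction on $n$.

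For the base case $n=1$, the functions $x_1 = x$ and $y_1 = y$ are (restrictions of) polynomial functions on $\mathbb{R}^3$, so their pull-backs $X_1 = \varphi^* x$ and $Y_1 = \varphi^* y$ are smooth on $Z_k$ because $\varphi$ is smooth.

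For the inductive step, I assume that $X_{n-1}$ and $Y_{n-1}$ are smooth functions on $Z_k$. Since $\varphi^*$ is an algebra morphism between the relevant function algebras on the open set where both sides are defined, the defining relations \eqref{eq:xnyn} pull back to
\[ X_n = \varphi^*\!\left(\tfrac{x}{z^k}\right) X_{n-1} - \varphi^*\!\left(\tfrac{y}{z^k}\right) Y_{n-1}, \qquad Y_n = \varphi^*\!\left(\tfrac{x}{z^k}\right) Y_{n-1} + \varphi^*\!\left(\tfrac{y}{z^k}\right) X_{n-1}, \]
a priori only on $Z_k \setminus \varphi^{-1}(\{0\})$ where $z \neq 0$. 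By Proposition \ref{coco3}, the pull-backs $\varphi^*(x/z^k)$ and $\varphi^*(y/z^k)$ extend to smooth functions on all of $Z_k$. Consequently the right-hand sides above are smooth on $Z_k$, being sums of products of smooth functions, and give the unique smooth extensions of $X_n$ and $Y_n$ to the exceptional locus $\varphi^{-1}(\{0\})$.

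There is no real obstacle here; the content of the corollary is entirely carried by Proposition \ref{coco3}, and the role of the induction is merely to propagate smoothness through the algebraic recursion. The statement is included because the smooth functions $X_n, Y_n$ on $Z_k$ will be used later (presumably to match the Fourier expansions of $G$-invariant smooth functions from Proposition \ref{refr} via the bijection $\underline{\phi_k}$).
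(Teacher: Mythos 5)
Your proof is correct and follows exactly the route the paper intends: the paper states Corollary \ref{coronYn} as an immediate consequence of Proposition \ref{coco3}, leaving implicit precisely the induction on $n$ through the recursion \eqref{eq:xnyn} that you spell out. Your added care about the identity holding a priori only off the exceptional locus $\varphi^{-1}(\{0\})$, with the smooth right-hand side providing the extension, is a welcome precision but not a departure from the paper's argument.
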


\noindent By same procedure, we equip the variety defined by 
$$  D_{2k+1} := \left\{ (x, y, z) \in \Rr^3 \middle| zx^2 + y^2 = z^{2k+1} =0  \right\}$$ 
with the Poisson structure defined as in \eqref{eq:Pichereau} with the help of the function $ zx^2 + y^2 - z^{2k+1}$:
 \begin{equation}\label{PoissonDk} \{x,y\}= -(2k+1)z^{2k}+x^2 , \{y,z\}= 2zx, \{z,x\}= 2y .\end{equation}
Let us now describe a symplectic resolution of the latter. The procedure is at first very similar.
\begin{enumerate}
    \item 
The strict transform of the blowing up at zero is covered by the three natural charts. 
\begin{enumerate}
    \item In the $x$-chart the strict transform is the variety $  zx-z^{2k+1}x^{2k-1} = - y^2$, whose unique singular point is $(0,0,0) $. 
    \begin{enumerate}
    \item The strict transform of the pull-back at the origin of the latter variety is smooth, and the pull-back of the Poisson structure \eqref{PoissonDk} is symplectic.
    \end{enumerate}
    \item  In the $y$-chart the strict transform is smooth and the pull-back of the Poisson structure \eqref{PoissonDk} is symplectic.
     \item  In the $z$-chart the strict transform is $ D_{2(k-1)}$, and the pull-back of the Poisson structure \eqref{PoissonDk} is the Poisson structure  \eqref{PoissonDk}, with $k$ being replaced by $ k-1$.
\end{enumerate}
   We are therefore brought back to the previous problem in changing $ k $ by $ k-1 $ and then repeat the operation.
    \item[$\vdots$]
    \item[]
    \item[ (k)] At the $k^{th}$-step, the situation is quite different from the one of $ A_{2k}$. The strict transform is the variety:
    \begin{equation*}
            zx^{2} + y^{2}-z=0
\end{equation*}
     which admits two singular points: $(1,0,0)$ and $(- 1,0,0)$.
     The pull-back of the Poisson structure \ref{PoissonDk} is given (as in \eqref{eq:Pichereau}) by 
       \begin{equation}\label{PoissonD0} \{x,y\}= x^2 -1, \{y,z\}= 2x z, \{z,x\}= 2y .\end{equation}
     \begin{enumerate}
         \item The blow up at $(1,0,0)$  admits a singular point.  Blowing-up at this point, we obtain at last a smooth variety, and the pull-back of the Poisson structure \eqref{PoissonD0} is symplectic.
         \item The blow up at $(-1,0,0)$  admits a singular point.  Blowing-up at this point, we obtain at last a smooth variety, and the pull-back of the Poisson structure \eqref{PoissonD0} is symplectic.
         We denote by $(Z_{k+2}, \pi_{k+2}) $ this symplectic manifold.
     \end{enumerate}
\end{enumerate}



Let $D_{2k+1}^+=D_{2k+1} \cap \{z>0\}$ and  $Z_{k+2}^+ :=\varphi^{-1}(\{z>0\}) \subset Z_{k+2} $.  $Z_{k+2}^+$ is  manifold (with boundary). We have:

\begin{prop}
\label{prop:sympresolA2k1}
For all $k \geq 2$:
\begin{enumerate}
    \item 
The map $\varphi: Z_{k+2}^+\to D_{2k+1}^+ $ is onto, and restricts to a diffeomorphism $ Z_{k+2}^+ \backslash \varphi^{-1}(\{0\}) \simeq D_{2k+1}^+ \backslash \{ 0\} $.
    \item 
The pull-back of the Poisson structure \eqref{PoissonDk} to the Du Val resolution of $ \varphi : Z_{k+2}^{+} \to D_{2k+1}^+$ is a symplectic structure.
\end{enumerate}
Equivalently, $(Z_{k+2}^+,\varphi,\pi_k)$ is a symplectic resolution of $ D_{2k+1}^+$. 
 \end{prop}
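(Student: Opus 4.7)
The plan is to treat Proposition \ref{prop:sympresolA2k1} as a bookkeeping exercise on top of the iterative construction described immediately before its statement, organized as an induction on $k$. At each of the first $k$ stages, the strict transform in the $x$-chart becomes smooth after exactly one further blow-up (with the pulled-back Poisson bracket symplectic on both charts created), the $y$-chart is already smooth with symplectic pull-back, and the $z$-chart reproduces $D_{2(k-1)+1}$ equipped with a Poisson bracket of the same form \eqref{PoissonDk} with $k$ replaced by $k-1$. The induction therefore reduces the problem to the terminal variety $zx^2+y^2 = z$ endowed with \eqref{PoissonD0}, whose only singular points are $(\pm 1, 0, 0)$.

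For item (1), I would argue that each elementary blow-up $Z_i \to Z_{i-1}$ is proper, surjective, and is an isomorphism away from the exceptional fiber over the blown-up point. Since every center of blow-up projects under the composite map to $(0,0,0) \in D_{2k+1}$, the composition $\varphi : Z_{k+2} \to D_{2k+1}$ is proper, surjective, and restricts to a diffeomorphism on the complement of $\varphi^{-1}(0)$. Because $z$ is a globally defined coordinate and each blow-up center lies in $\{z=0\}$, the open set $\{z>0\}$ is saturated under every blow-up map, so intersecting the whole construction with $\{z > 0\}$ preserves properness, surjectivity, and the diffeomorphism property. The closure $Z_{k+2}^+ = \varphi^{-1}(D_{2k+1}^+)$ is then a manifold with boundary $\varphi^{-1}(\{z=0\} \cap D_{2k+1}^+)$.

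For item (2), since $Z_{k+2}^+$ is two-dimensional, I only need to verify that the pulled-back bivector $\varphi^* \pi$ is nowhere zero; that automatically upgrades it to a symplectic form. This is done chart by chart. In every intermediate $x$- or $y$-chart created during the first $k$ blow-ups, a direct application of the blow-up substitution to \eqref{eq:Pichereau} shows that $\varphi^* \pi$ does not vanish on the exceptional divisor; in each intermediate $z$-chart the inductive hypothesis applies. The base case is where the actual work sits: one needs to blow up $(1,0,0)$ and $(-1,0,0)$ inside $\{zx^2+y^2=z\}$, check in each affine chart of each blow-up that the strict transform is smooth, and verify that the pull-back of \eqref{PoissonD0} is nondegenerate there.

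The main obstacle is this last verification at $(\pm 1, 0, 0)$. The coefficient $x^2 - 1 = (x-1)(x+1)$ of $\partial_x \wedge \partial_y$ in \eqref{PoissonD0} vanishes exactly at the blow-up centers, so nondegeneracy of $\varphi^* \pi$ on the exceptional divisor is not automatic; one has to use local coordinates $(x\mp 1, y, z)$ adapted to each blow-up, expand $x^2 - 1$ and the two other brackets as polynomials in these shifted coordinates, and check that after one blow-up (and then a second one, dictated by the step (k)(a)(b) of the construction) the resulting bivector has a nonzero coefficient on every exceptional chart. Once this local computation is done at both $(1,0,0)$ and $(-1,0,0)$, items (1) and (2) combine with Definition \ref{def:Lassoued} to give the final statement that $(Z_{k+2}^+, \varphi, \pi_k)$ is a symplectic resolution of $D_{2k+1}^+$.
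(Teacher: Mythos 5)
Your proposal is correct and follows essentially the same route as the paper: the paper offers no separate proof of this proposition, which is stated as a recapitulation of the iterated blow-up description immediately preceding it, and your outline (induction on $k$ through the $x$-, $y$- and $z$-charts, standard properness/isomorphism properties of blow-ups for item (1), and chart-by-chart nonvanishing of the pulled-back bivector for item (2)) reproduces exactly that construction. You are in fact more explicit than the paper about the one place where real work remains --- the nondegeneracy check after the two successive blow-ups at $(\pm 1,0,0)$, where the coefficient $x^2-1$ in \eqref{PoissonD0} vanishes at the centers --- a verification the paper simply asserts without computation.
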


\begin{prop}
The pull back $\frac{x}{z^{k+\nicefrac{1}{2}}}$ and $\frac{y}{z^{k+\nicefrac{1}{2}}}$ by the map $Z_{k+2}^{+}\rightarrow D_{2k+1}^+$ are smooth functions on $Z_{k+2}$.
\end{prop}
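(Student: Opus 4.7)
The plan is to mimic the inductive argument of Proposition \ref{coco3}, tracking the pull-back of the rational functions $x/z^{k+\nicefrac{1}{2}}$ and $y/z^{k+\nicefrac{1}{2}}$ through each of the $k+2$ successive blow-ups that build the symplectic resolution $\varphi\colon Z_{k+2}\to D_{2k+1}^+$. The only point of $D_{2k+1}^+$ where either function is not already smooth is the origin, which is contained in the exceptional locus of $\varphi$; so the argument is local and chart-by-chart.

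First, I treat the $k$ blow-ups at the (successive) origins of $D_{2k+1},D_{2k-1},\ldots,D_{3}$. At the $j$-th such step, only the $z$-chart is delicate. In the $z$-chart the new coordinates $(X,Y,z)$ satisfy $x_{j-1}=zX$, $y_{j-1}=zY$, and the strict transform is $D_{2(k-j)+1}$. A direct substitution gives
\[
\frac{x_{j-1}}{z^{(k-j+1)+\nicefrac{1}{2}}}=\frac{zX}{z^{(k-j+1)+\nicefrac{1}{2}}}=\frac{X}{z^{(k-j)+\nicefrac{1}{2}}},\qquad\frac{y_{j-1}}{z^{(k-j+1)+\nicefrac{1}{2}}}=\frac{Y}{z^{(k-j)+\nicefrac{1}{2}}},
\]
so the exponent in the denominator drops by one. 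In the $x$- and $y$-charts, the pull-back of $z$ cannot vanish on the exceptional divisor meeting the strict transform (same computation as in the proof of Proposition \ref{coco3}), so $z^{k+\nicefrac{1}{2}}$ is a non-vanishing smooth function there and the quotients are automatically smooth. After $k$ iterations, the two functions to be studied have become $X_k/\sqrt{z}$ and $Y_k/\sqrt{z}$ on the strict transform $D_1=\{zX_k^{2}+Y_k^{2}=z\}$, and smoothness is obvious on $D_1\cap\{z>0\}$ away from $(\pm 1,0,0)$.

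Second, I analyze the two remaining blow-ups, at $(\pm 1,0,0)\in D_1$. By the symmetry $X_k\mapsto -X_k$ it is enough to treat $(1,0,0)$. Translating $u=X_k-1$ gives the local equation $zu(u+2)+Y_k^{2}=0$, from which $z=-Y_k^{2}/\bigl(u(u+2)\bigr)$; this already shows that $z$ vanishes to even order in $Y_k$ away from $u=0$, a feature that will produce a smooth square root. I then work out each of the three affine charts of the blow-up of $D_1$ at the translated origin, together with the chart of the extra blow-up needed to reach the smooth symplectic model described above Proposition \ref{prop:sympresolA2k1}. In the chart distinguished by the symplectic resolution, the strict transform is smooth, one of the new local coordinates (call it $T$) satisfies $z=T^{2}\cdot h(\ldots)$ with $h$ a strictly positive smooth function, and both $X_k$ and $Y_k$ are smooth in the new coordinates with $Y_k$ divisible by~$T$. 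Substituting, $X_k/\sqrt{z}$ and $Y_k/\sqrt{z}$ become smooth functions of the local coordinates on $Z_{k+2}^{+}$.

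The main obstacle, and what genuinely distinguishes this proposition from Proposition \ref{coco3}, is the half-integer power $z^{\nicefrac{1}{2}}$. For $A_{2k}$ one only needs to see that the exponent of $z$ eventually drops to zero; here one must in addition ensure that the final $\sqrt{z}$ is smooth in local coordinates, i.e.\ that the blow-up sequence effectively presents $z$ as a smooth perfect square on each chart meeting the exceptional divisor. This step uses the real geometry in an essential way (the restriction $D_{2k+1}^{+}$ keeps $z\geq 0$) and is precisely what forces the two extra blow-ups at $(\pm 1,0,0)$ beyond the $k$ blow-ups at the origin; verifying it amounts to a careful, but routine, bookkeeping of the local equations of the strict transform in the final chart.
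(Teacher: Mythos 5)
Your overall strategy --- pushing the two rational functions through the blow-ups chart by chart, as in Proposition \ref{coco3} --- is exactly what the paper's one-line proof gestures at, and your Step 1 bookkeeping of exponents is correct as far as it goes. But the proof cannot be completed, because the statement as printed is false for the first function. On $D_{2k+1}\cap\{z>0\}$ the defining equation gives $x^{2}=z^{2k}-y^{2}/z\le z^{2k}$, with equality at the points $(\pm z^{k},0,z)$; at those points $x/z^{k+\nicefrac{1}{2}}=\pm z^{-1/2}$, so this function is \emph{unbounded} near the origin of $D_{2k+1}^{+}$. Since each blow-down is proper, preimages of these points accumulate on the fibre $\varphi^{-1}(0)$, so $\varphi^{*}\bigl(x/z^{k+\nicefrac{1}{2}}\bigr)$ admits no continuous, let alone smooth, extension to $Z_{k+2}$. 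The same failure is visible inside your Step 2: at the last stage $X_{k}(\pm1,0,0)=\pm1\neq 0$ while $\sqrt{z}$ vanishes along the exceptional fibre, so $X_{k}/\sqrt{z}$ blows up and no choice of charts can make it smooth. (Equivalently, under Lemma \ref{afair2} and the formulas of Section \ref{sec:Quotient2}, $x/z^{k+\nicefrac{1}{2}}$ corresponds to $\cos(2\pi p/q)/|q|$ on $\Rr^{2}\hspace*{-1.5pt}/\hspace*{-1.5pt}G'$.) What the recursion \eqref{eq:xnyn1} and Corollary \ref{coronYn23} actually require is the pair $x/z^{k}$ and $y/z^{k+\nicefrac{1}{2}}$, the functions corresponding to $\cos(2\pi p/q)$ and $\pm\sin(2\pi p/q)$; note that $x/z^{k}$ (and also $y/z^{k}$) pulls back to the stage-$k$ chart coordinate itself, which is trivially smooth.

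There is a second, independent gap which affects even the $y$-half of the claim: your Step 1 declares smoothness ``obvious away from $(\pm1,0,0)$'', but that is precisely where the difficulty sits. After the $k$ blow-ups, the fibre of $\varphi$ over the origin is not the pair of singular points: the strict transform $D_{1}=\{zX_{k}^{2}+Y_{k}^{2}=z\}$ meets the exceptional divisor along the whole line $\{Y_{k}=z=0\}$, and the closure of $\varphi^{-1}(\{z>0\})$ contains the segment $|X_{k}|\le 1$ of that line, whose interior points are \emph{smooth} points of $D_{1}$ and are never blown up. Near such a point $D_{1}$ is the graph $z=Y_{k}^{2}/(1-X_{k}^{2})$, hence $\varphi^{*}\sqrt{z}=|Y_{k}|/\sqrt{1-X_{k}^{2}}$, so $\varphi^{*}\bigl(y/z^{k+\nicefrac{1}{2}}\bigr)=\mathrm{sign}(Y_{k})\sqrt{1-X_{k}^{2}}$ jumps across the segment (and $X_{k}/\sqrt{z}$ is unbounded there). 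This also refutes your final criterion: writing $z=T^{2}h$ with $h>0$ smooth only yields $\sqrt{z}=|T|\sqrt{h}$, which is smooth only where $T$ cannot change sign, and near the interior of the segment both signs of $T=Y_{k}$ occur in $Z_{k+2}$. Repairing the proposition therefore needs more than careful bookkeeping: one must either restrict to $\varphi^{-1}(\{z>0\})$ (where the claim is empty), or work with $x/z^{k}$ and $y/z^{k}$ only (which suffices for Corollary \ref{coronYn23}, since $x_{n}$ and $y_{n}$ are $z^{k}$ times polynomials in these), or replace $Z_{k+2}$ along the exceptional fibre by a model in which the angular circle is not folded onto a segment, e.g.\ $(\theta,t)\in S^{1}\times[0,\infty)\mapsto\bigl(t^{2k}\cos 2\pi\theta,\,t^{2k+1}\sin 2\pi\theta,\,t^{2}\bigr)$, on which $x/z^{k}$ and $y/z^{k+\nicefrac{1}{2}}$ are indeed smooth. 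Since the paper's own proof is a one-line reference to Proposition \ref{coco3}, your attempt has real value: it shows that the analogy with $A_{2k}$ genuinely breaks at the half-integer power, in a way the paper does not address.
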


\begin{proof}[Proof]
The proof is similar to that of the Proposition \ref{coco3}. We leave it to the reader.
\end{proof}

\noindent We shall consider the following functions, similar to those in (\ref{eq:xnyn}):

\begin{equation}
\label{eq:xnyn1}
  \left\{\begin{array}{rcl}
              x_{n}=\frac{x_{1}}{z^{k+\nicefrac{1}{2}}}x_{n-1}-\frac{y_{1}}{z^{k+\nicefrac{1}{2}}}y_{n-1}\\
						  y_{n}=\frac{x_{1}}{z^{k+\nicefrac{1}{2}}}y_{n-1} + \frac{y_{1}}{z^{k+\nicefrac{1}{2}}}x_{n-1}.
 \end{array}\right.
\end{equation}
It follows from all of the above that:

\begin{coro}\label{coronYn23}
The pull back $X_n := \varphi^* x_n$ and $Y_n := \varphi^* y_n $ of functions $x_n,y_n$ defined in (\ref{eq:xnyn1}), by the Du Val resolution $ \varphi : Z_{k+2} \to D_{2k+1}$ are polynomial functions on $Z_{k+2}$.
\end{coro}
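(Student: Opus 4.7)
The plan is an induction on $n \geq 1$, entirely parallel to the proof of Corollary~\ref{coronYn}. The whole point of the construction is to let the proposition immediately preceding this corollary do the work; once that is granted, the recurrence \eqref{eq:xnyn1} is algebraically closed under smoothness (and under being polynomial in each chart of the successive blowups).

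For the base case $n = 1$, the functions $x_{1} = x$ and $y_{1} = y$ are just coordinate functions on $\mathbb{R}^{3}$, so $X_{1} = \varphi^{*}x$ and $Y_{1} = \varphi^{*}y$ are, in each affine chart of $Z_{k+2}$, polynomial expressions in the blow-up coordinates; in particular they are smooth on all of $Z_{k+2}^{+}$.

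For the inductive step, assume $X_{n-1}$ and $Y_{n-1}$ have the required regularity. By the proposition immediately before the corollary, the two rational functions $x_{1}/z^{k+\nicefrac{1}{2}}$ and $y_{1}/z^{k+\nicefrac{1}{2}}$ on $D_{2k+1}^{+}$ pull back through $\varphi$ to functions that are smooth on $Z_{k+2}^{+}$ (and polynomial in each affine chart). Since $\varphi^{*}$ is an algebra morphism on the open dense locus $\{z > 0\}$ where all the rational expressions in \eqref{eq:xnyn1} are well-defined, and since both sides of the induced identity are continuous on $Z_{k+2}^{+}$, the identity extends by density to all of $Z_{k+2}^{+}$:
\begin{align*}
X_{n} &= \varphi^{*}\!\left(\tfrac{x_{1}}{z^{k+\nicefrac{1}{2}}}\right) X_{n-1} \; - \; \varphi^{*}\!\left(\tfrac{y_{1}}{z^{k+\nicefrac{1}{2}}}\right) Y_{n-1}, \\
Y_{n} &= \varphi^{*}\!\left(\tfrac{x_{1}}{z^{k+\nicefrac{1}{2}}}\right) Y_{n-1} \; + \; \varphi^{*}\!\left(\tfrac{y_{1}}{z^{k+\nicefrac{1}{2}}}\right) X_{n-1}.
\end{align*}
The right-hand sides are sums and products of smooth, chart-by-chart polynomial functions, so $X_{n}$ and $Y_{n}$ inherit the same regularity, and the induction closes.

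The only real obstacle lies not in the induction but in the input proposition: one has to check that although $x_{1}/z^{k+\nicefrac{1}{2}}$ is \emph{a priori} singular along the exceptional locus $\{z = 0\}$, the sequence of blowups producing $Z_{k+2}$ absorbs this singularity into the exceptional divisor. This is exactly what is verified by tracking the rational function chart by chart through each blowup, in analogy with the proof of Proposition~\ref{coco3}. Once that verification is in hand, the induction above is purely formal.
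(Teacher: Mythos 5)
Your proof is correct and follows essentially the same route as the paper: the paper obtains the corollary directly from the preceding proposition (smoothness of $\varphi^*\bigl(x/z^{k+\nicefrac{1}{2}}\bigr)$ and $\varphi^*\bigl(y/z^{k+\nicefrac{1}{2}}\bigr)$, proved chart by chart as in Proposition~\ref{coco3}) combined with the recursion \eqref{eq:xnyn1}, which is exactly the induction you spell out. The paper states this step as ``it follows from all of the above'' without writing the induction or the density argument, so your version just makes explicit what the paper leaves implicit.
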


\section{Quotient Poisson structure}\label{sec:Quotient2}

\noindent We now introduce the Poisson bracket on the quotient space $\Rr^2\hspace*{-1.5pt} / \hspace*{-1.5pt}G$ (resp $\Rr^2\hspace*{-1.5pt} / \hspace*{-1.5pt}G'$), with $G$ and $G'$ acting on $M = \mathbb R^2 $ as in Section \ref{sec:quotient}.
We introduce for $ k \geq 2 $ the algebra of functions denoted by $C^{k,\infty}(\Rr^2)$ which are:
\begin{enumerate}
    \item[a)] differentiable  $\floor{k/2}$-times on $\mathbb R^2 $,
    \item[b)] and smooth outside the straight line $q=0 $.
\end{enumerate}

\noindent
 We now  equip $M=\Rr^2$ with the following Poisson bracket
\begin{equation} \label{eq:Poisson}\{p,q\}=\frac{q}{2\pi}. \end{equation}

\noindent
The following lemmas are easily verified.
\begin{lem} \label{lem:stable} For every $k\in\left\{2,3,\ldots,+\infty\right\}$, the algebra $C^{k,\infty}(\Rr^{2})$ is stable under the Poisson bracket \eqref{eq:Poisson}, i.e is a Poisson algbera. 
\end{lem}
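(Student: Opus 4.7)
My plan is to compute the bracket explicitly from the defining formula and then check the two conditions of $C^{k,\infty}(\mathbb R^2)$ separately. The bracket reads
$$\{f,g\} \;=\; \tfrac{q}{2\pi}\bigl(\partial_p f\,\partial_q g - \partial_q f\,\partial_p g\bigr),$$
so condition (b) (smoothness outside $\{q=0\}$) is immediate: $f,g$ are smooth there, hence so are their first partial derivatives and any bilinear combination of them, and multiplication by the smooth factor $q/(2\pi)$ preserves smoothness.

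For condition (a) ($\lfloor k/2\rfloor$-times differentiability on all of $\mathbb R^2$), set $H := \partial_p f\,\partial_q g - \partial_q f\,\partial_p g$. For any multi-index $\alpha$ with $|\alpha|\le\lfloor k/2\rfloor$, Leibniz gives
$$\partial^\alpha(qH) \;=\; q\,\partial^\alpha H \;+\; \alpha_q\,\partial^{\alpha-e_q}H,$$
where $e_q$ denotes the unit multi-index in the $q$-direction and $\alpha_q$ the corresponding component of $\alpha$. The second summand involves only $|\alpha|-1\le\lfloor k/2\rfloor-1$ derivatives of $H$, hence at most $\lfloor k/2\rfloor$ derivatives of $f,g$, and these exist and are continuous on $\mathbb R^2$ by hypothesis. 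The first summand $q\,\partial^\alpha H$ is smooth off $\{q=0\}$, and the factor $q$ is expected to absorb the one missing order of differentiability at the line, yielding a continuous extension by zero across $\{q=0\}$.

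The main obstacle I expect is this last extension: one must check that the smoothness of $H$ off the line $\{q=0\}$, combined with the a priori $\lfloor k/2\rfloor$-differentiability of $f,g$ on the whole plane, forces $q\,\partial^\alpha H \to 0$ along any approach to the line. Once this boundary-regularity point is settled, everything else is a routine application of the Leibniz and chain rules, and the two conditions defining $C^{k,\infty}(\mathbb R^2)$ follow simultaneously.
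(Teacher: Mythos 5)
You cannot be compared unfavourably to the paper here, because the paper offers no argument at all: the lemma is introduced by the sentence ``The following lemmas are easily verified.'' Your outline is the natural attempt, and its routine parts are correct: condition (b) is indeed immediate, and the Leibniz splitting $\partial^\alpha(qH)=q\,\partial^\alpha H+\alpha_q\,\partial^{\alpha-e_q}H$ correctly isolates the only problematic term. But the step you postpone --- that the factor $q$ ``absorbs the one missing order of differentiability'', i.e.\ that $q\,\partial^\alpha H$ extends continuously by $0$ across $\{q=0\}$ when $|\alpha|=\lfloor k/2\rfloor$ --- is not a boundary-regularity formality to be settled afterwards: it is the entire content of the lemma, and for finite $k$ it fails in the stated generality. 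The hypotheses give no control whatsoever on the rate at which derivatives of $H$ of order $\geq\lfloor k/2\rfloor$ blow up as one approaches the line; smoothness off $\{q=0\}$ together with $C^{\lfloor k/2\rfloor}$-regularity on $\mathbb{R}^2$ is compatible with blow-up faster than $1/q$, which a single factor of $q$ cannot tame.

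Concretely, take $k\in\{2,3\}$ (so $\lfloor k/2\rfloor=1$), $g=p$, and $f(p,q)=q^{5/2}\sin(1/q)$ for $q>0$, $f=0$ for $q\leq 0$. Then $f$ is $C^1$ on $\mathbb{R}^2$ and smooth off the line, so $f,g\in C^{k,\infty}(\mathbb{R}^2)$, and $H=-\partial_q f$. For $q>0$ one computes
\[
2\pi\,\partial_q\{f,g\}\;=\;-\partial_q\bigl(q\,\partial_q f\bigr)\;=\;-\tfrac{25}{4}\,q^{3/2}\sin(1/q)+4\,q^{1/2}\cos(1/q)+q^{-1/2}\sin(1/q),
\]
which is unbounded as $q\to 0^{+}$: your hoped-for limit $q\,\partial_q H=-q\,\partial_q^{2}f\to 0$ fails, and $\{f,g\}=-\tfrac{q}{2\pi}\partial_q f$ is differentiable but not $C^1$, hence not in $C^{k,\infty}(\mathbb{R}^2)$ under the reading of condition (a) as ``of class $C^{\lfloor k/2\rfloor}$''. (Reading (a) as mere pointwise differentiability does not rescue the statement either: $f=q^{2}\sin(1/q)$, $g=p$ then produces a bracket having no $q$-derivative at points of the line.) So the proposal cannot be completed as written; only the case $k=+\infty$, where $C^{k,\infty}(\mathbb{R}^2)=C^\infty(\mathbb{R}^2)$, is genuinely easy. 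What is true, and what the rest of the paper actually relies on, is stability on subalgebras whose elements are, up to a function of $q$ alone, flat along $\{q=0\}$ with quantitative bounds on their derivatives --- the algebra $\mathcal{A}$ generated by $x,y,z$ in Section~\ref{sec:Quotient2} (stability there is the explicit bracket computation), and the invariant functions of Proposition~\ref{refr} with the estimates of Lemma~\ref{lem:alphank}. On such functions the factor $q$ really does absorb the loss of regularity, by exactly the mechanism you describe; any honest proof of the lemma must either restrict to such a class or add a hypothesis controlling derivative growth near the line.
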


\noindent
and

\begin{lem}
\label{lem:invariant}
The Poisson bracket \eqref{eq:Poisson} is invariant under the actions of $G$ and $G'$.
\end{lem}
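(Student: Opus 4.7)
The plan is to reduce invariance under the full groups $G$ and $G'$ to invariance under a small set of generators, then verify those by direct calculation on the generators $p,q$ of $C^\infty(\Rr^2)$. A diffeomorphism $\phi\colon \Rr^2\to\Rr^2$ is Poisson for the bracket \eqref{eq:Poisson} if and only if $\{\phi^*f,\phi^*g\}=\phi^*\{f,g\}$ for all smooth $f,g$, and by bilinearity and the Leibniz rule this needs only to be checked on the coordinate functions $p$ and $q$. Since $G$ is generated by the unit shear $\sigma\colon(p,q)\mapsto(p+q,q)$ and $G'$ is generated by $\sigma$ together with the sign flip $\tau\colon(p,q)\mapsto(p,-q)$, it suffices to check Poisson-invariance for $\sigma$ and $\tau$.

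For the shear $\sigma_n\colon(p,q)\mapsto(p+nq,q)$ with $n\in\Zz$, I would compute
\[
\{\sigma_n^*p,\sigma_n^*q\}=\{p+nq,q\}=\{p,q\}+n\{q,q\}=\tfrac{q}{2\pi},
\]
which equals $\sigma_n^*\{p,q\}=\sigma_n^*\bigl(\tfrac{q}{2\pi}\bigr)=\tfrac{q}{2\pi}$. For $\tau$, the analogous computation gives
\[
\{\tau^*p,\tau^*q\}=\{p,-q\}=-\tfrac{q}{2\pi}=\tau^*\bigl(\tfrac{q}{2\pi}\bigr)=\tau^*\{p,q\}.
\]
Both required identities hold on the generators of the coordinate algebra, so $\sigma$ and $\tau$ are Poisson automorphisms. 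Since being a Poisson automorphism is preserved under composition and inversion, every element of $G$ and $G'$ acts by Poisson automorphisms, which is exactly the invariance claim.

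There is no real obstacle here: the lemma is a short verification, and the only thing worth emphasising is that one should reduce to generators before calculating, otherwise one is tempted to invoke the full group. Equivalently, one can rephrase the argument at the level of the bivector $\pi_{\Rr^2}=\tfrac{q}{2\pi}\,\partial_p\wedge\partial_q$: under $\sigma_n$ one has $\partial_p\mapsto\partial_p$ and $\partial_q\mapsto\partial_q-n\,\partial_p$ (with $q$ unchanged), so the wedge product is unchanged; under $\tau$ both $q$ and $\partial_q$ change sign, and the two sign changes cancel. Either formulation gives the result.
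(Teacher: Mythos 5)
Your proof is correct; the paper itself offers no argument for this lemma (it merely states that it is ``easily verified''), and your reduction to the generators $\sigma\colon(p,q)\mapsto(p+q,q)$ and $\tau\colon(p,q)\mapsto(p,-q)$ followed by the computation on $p,q$ is exactly the intended verification. One small precision: for smooth (non-polynomial) functions, ``bilinearity and the Leibniz rule'' alone do not literally reduce the check to the coordinates $p,q$ --- what justifies the reduction is that both $(f,g)\mapsto\{f\circ\phi,g\circ\phi\}$ and $(f,g)\mapsto\{f,g\}\circ\phi$ are biderivations given pointwise by bivectors evaluated on $df,dg$ --- but your closing reformulation in terms of $\pi_{\Rr^2}=\tfrac{q}{2\pi}\,\partial_p\wedge\partial_q$ supplies precisely this, so the argument is complete.
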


\noindent 
In view of Lemmas \ref{lem:stable} and \ref{lem:invariant}, the following algebras come equipped with the following induced Poisson brackets:
 $C^\infty(M\hspace*{-1.5pt} / \hspace*{-1.5pt}G) := C^\infty(M)^G$, $C^\infty(M\hspace*{-1.5pt} / \hspace*{-1.5pt}G'):=C^\infty(M)^{G'}$, $ C^{k,\infty}(M\hspace*{-1.5pt} / \hspace*{-1.5pt}G):= C^{k,\infty}(M)^G$, and $C^{k,\infty}(M)^{G'} $ (see Section \ref{sec:quotient} for notations).

\begin{rema}
\normalfont
The symplectic Poisson bracket $ \left\{p,q\right\}=1 $ is also invariant under the actions of $G$ and $G'$, but does not satisfy Lemma \ref{lem:stable}.
Also, since the singular locus of both group actions (\emph{i.e.} the straight line $q=0$)  projects to a single  point in the quotient space, it makes more sense to choose a Poisson structure on $\mathbb R^2 $ that vanishes on this straight line. 
\end{rema}

From now on, we  deal with the $G$-action only. Results for the $G'$-action will be stated at the end of the section.

\noindent Let $\mathcal{A}$ be the subalgebra of $C^{k,\infty}(\Rr^2\hspace*{-1.5pt} / \hspace*{-1.5pt}G)$ generated by $$\left(x(p,q):=q^{k}\cos\left(2\pi\frac{p}{q}\right), \, \, y(p,q):=q^{k}\sin\left(2\pi\frac{p}{q}\right), \, \, z(p,q):=q\right).$$ 

\begin{prop}
The algebra $\mathcal A $ is stable under the Poisson bracket \eqref{eq:Poisson}.
\end{prop}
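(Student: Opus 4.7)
The plan is to verify that the Poisson brackets of the three generators $x = q^k \cos(2\pi p/q)$, $y = q^k \sin(2\pi p/q)$, and $z = q$ can themselves be written as polynomials in $x,y,z$. Once this is established, stability of $\mathcal{A}$ under $\{\cdot,\cdot\}$ follows immediately from the Leibniz rule: if $\{x,y\}, \{y,z\}, \{z,x\} \in \mathcal{A}$, then so is the bracket of any two polynomial expressions in the generators.

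First, I would compute $\{x,z\}$ and $\{y,z\}$. Since $\{f,q\} = (q/2\pi)\, \partial_p f$, and $\partial_p x = -2\pi q^{k-1}\sin(2\pi p/q)$, $\partial_p y = 2\pi q^{k-1}\cos(2\pi p/q)$, I expect to find
\[
\{x,z\} = -y, \qquad \{y,z\} = x.
\]
Next, for $\{x,y\}$, the computation is a bit longer because the $q$-derivatives of $x$ and $y$ each produce \emph{two} terms (one from differentiating $q^k$ and one from differentiating the argument $2\pi p/q$). The key observation is that the second, singular-looking contribution involves a factor $p$, and the cross-terms in $\partial_p x\,\partial_q y - \partial_q x\,\partial_p y$ cancel. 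What survives is the clean pair $-2\pi k q^{2k-2}(\sin^2 + \cos^2) = -2\pi k q^{2k-2}$; multiplying by $q/2\pi$ gives
\[
\{x,y\} = -k\, q^{2k-1} = -k\, z^{2k-1}.
\]

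Putting these together, the bracket table
\[
\{x,y\} = -k\,z^{2k-1}, \qquad \{y,z\} = x, \qquad \{z,x\} = y
\]
lives entirely inside $\mathcal{A}$, so $\mathcal{A}$ is Poisson stable. The only mild pitfall is making sure the cancellation of the $p$-dependent terms in the computation of $\{x,y\}$ is handled carefully, since individually each of those terms behaves badly near $q=0$ (containing an explicit factor $p/q^2$); however they appear with opposite signs and cancel identically, leaving a bracket that not only lies in $\mathcal{A}$ but is even polynomial in $z$ alone. Aside from this, the computation is routine. As a sanity check, one notes that $x^2+y^2 = q^{2k} = z^{2k}$, and the resulting bracket on $\mathcal{A}$ agrees, up to the expected overall factor of $2$, with the Pichereau bracket \eqref{hichem2} defining the Poisson structure on the Du Val singularity $A_{2k}$ — which is precisely the relation that will be exploited in Section~\ref{sec:resolution}.
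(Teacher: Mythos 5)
Your proof is correct and takes the same route as the paper: compute the pairwise brackets of the generators $x,y,z$, observe that they lie in $\mathcal{A}$, and conclude by bilinearity and the Leibniz rule. The one discrepancy is the overall factor of $2$: with $\{p,q\}=q/2\pi$ as in \eqref{eq:Poisson}, your table $\{x,y\}=-k\,z^{2k-1}$, $\{y,z\}=x$, $\{z,x\}=y$ is the correct one, while the paper's equation \eqref{eq:bracketsinA} lists $-2k\,z^{2k-1}$, $2x$, $2y$, i.e.\ exactly the Pichereau bracket \eqref{hichem2} on $A_{2k}$. This rescaling is immaterial for the stability claim, but the mismatch you flag in your sanity check is genuine: with the normalizations as stated in the paper, $\underline{\phi_k}$ intertwines \eqref{eq:Poisson} with one half of \eqref{hichem2}, so for Proposition \ref{jti} to hold on the nose one must rescale either the bracket on $\mathbb{R}^2$ (to $\{p,q\}=q/\pi$) or the bracket on $A_{2k}$ by the corresponding factor of $2$.
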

\begin{proof}
 A direct computation with this bracket gives
\begin{equation}
\label{eq:bracketsinA}
\left\{\begin{array}{rcll}
\{x,y\}&=&-2k \, z^{2k-1}  &\in \mathcal{A}\\
          \{y,z\}&=&2 \, x& \in \mathcal{A}\\
           \{z,x\}&=&2 \, y & \in \mathcal{A}.
 \end{array}\right.\end{equation}
 This proves the result.
\end{proof}





Let us consider the map $ \mathbb R^2 \to \mathbb R^3$
defined by
 $$ \phi_k \colon \big(p,q\big) \mapsto \big(x(p,q),y(p,q),z(p,q)\big) .$$
 Since $x,y,z$ are $G$-invariant functions, and are equal to $0$ on the straight line $ q=0$, this map goes to the quotient to define a map
 $\underline{\phi_k} \colon \mathbb R^2 \hspace*{-1.5pt} / \hspace*{-1.5pt} G \to \mathbb R^3$.
Since $x^2 + y ^2 = z^{2k} $, this map takes values in the Du Val singular surface $A_{2k} $ (see Section \ref{sec:quotient}).

\noindent  
\begin{lem}
\label{afair}
The map $ \underline{\phi_k}  \colon \mathbb R^2\hspace*{-1.5pt} / \hspace*{-1.5pt}G \to A_{2k}$
 is one-to-one.
Its restriction to $ (\mathbb R^* \times \mathbb R) \hspace*{-1.5pt} / \hspace*{-1.5pt} G \simeq A_{2k} \backslash \{0\} $ is a diffeomorphism. 
 \end{lem}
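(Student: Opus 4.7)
The plan is to split the statement into two independent tasks: first, establishing the set-theoretic bijection $\underline{\phi_k}\colon \Rr^2\hspace*{-1.5pt} / \hspace*{-1.5pt} G \to A_{2k}$; second, upgrading this to a diffeomorphism on the regular locus $\{q\ne 0\}\hspace*{-1.5pt} / \hspace*{-1.5pt} G \simeq A_{2k}\setminus\{0\}$ by a rank argument on $\mathrm{d}\phi_k$.

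For the bijection, the guiding observation is that the third component of $\phi_k(p,q)$ is simply $q$, so the fibre of $\underline{\phi_k}$ over a given point $(x,y,z)\in A_{2k}$ sits entirely in the slice $\{q=z\}$. When $z\neq 0$, the defining equation $x^2+y^2=z^{2k}$ forces $(x/z^k,\,y/z^k)$ to lie on the unit circle, so there is a unique angle $\theta\in \Rr\hspace*{-1.5pt} / \hspace*{-1.5pt} 2\pi\Zz$ with $(\cos\theta,\sin\theta)=(x/z^k,\,y/z^k)$; the preimages in $\Rr^2$ are then exactly the $(p,z)$ with $p/z\in \theta/(2\pi)+\Zz$, which is precisely one $G$-orbit. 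When $z=0$, the equation forces $x=y=0$, and by the Convention at the start of Section \ref{sec:quotient} the whole line $\{q=0\}$ is collapsed to a single class in $\Rr^2\hspace*{-1.5pt} / \hspace*{-1.5pt} G$, mapping to $(0,0,0)$. This gives bijectivity.

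For the smooth upgrade, I would first notice that the $G$-action on $\{q\neq 0\}$ is free and properly discontinuous (if $p+nq=p$ with $q\neq 0$ then $n=0$, and on any compact set of $\Rr\times\Rr^*$ the value $|q|$ is bounded away from $0$, so only finitely many $n$ translate points back into the set). Hence $(\Rr\times\Rr^*)\hspace*{-1.5pt} / \hspace*{-1.5pt} G$ is a smooth $2$-manifold. Similarly, $A_{2k}\setminus\{0\}$ is a smooth submanifold of $\Rr^3$ because the gradient of $F=x^2+y^2-z^{2k}$ is $(2x,2y,-2kz^{2k-1})$, which vanishes only at the origin. It then suffices to check that $\mathrm{d}\phi_k$ is injective at every point with $q\neq 0$: the column $\partial_p\phi_k=(-2\pi q^{k-1}\sin(2\pi p/q),\,2\pi q^{k-1}\cos(2\pi p/q),\,0)$ is nonzero and has zero third component, while $\partial_q\phi_k$ has third component equal to $1$, so the two are linearly independent. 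Thus $\underline{\phi_k}$ is a bijective local diffeomorphism between two smooth surfaces, hence a global diffeomorphism.

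The only genuine subtlety is the behaviour at $q=0$: without the \emph{closure} of the $G$-equivalence relation (which identifies all points on the line $\{q=0\}$), injectivity would fail, since distinct non-$G$-equivalent points of that line would all map to $(0,0,0)$. Once this is handled correctly the rest is a routine coordinate verification; I do not expect any further obstacle.
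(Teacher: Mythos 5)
Your proposal is correct, and it is in fact more complete than the paper's own proof. For the bijection, your fibre-by-fibre analysis (the third coordinate pins the fibre to the slice $\{q=z\}$; for $z\neq 0$ the circle parametrization of $(x/z^k,y/z^k)$ singles out exactly one $G$-orbit; for $z=0$ the \emph{closed} relation collapses the whole line to one class mapping to the origin) is essentially the same argument the paper gives, merely organized as a fibre description rather than split into injectivity (equal images force $q=q'$ and $p-p'\in q\Zz$) and surjectivity (surjectivity of $u\mapsto(\cos u,\sin u)$ onto $S^1$). The genuine difference lies in the second claim: the paper's proof stops after bijectivity and never addresses the diffeomorphism statement at all, whereas you supply a complete argument for it --- the $G$-action on $\{q\neq 0\}$ is free and properly discontinuous, so the quotient is a smooth surface; $A_{2k}\setminus\{0\}$ is a regular level set of $x^2+y^2-z^{2k}$ since the gradient only vanishes at the origin; the differential $\mathrm{d}\phi_k$ has rank $2$ wherever $q\neq 0$ (your column computation is correct, with $\lVert\partial_p\phi_k\rVert = 2\pi\lvert q\rvert^{k-1}\neq 0$ and third components $0$ and $1$); and a bijective local diffeomorphism between surfaces is a global diffeomorphism. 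This rank argument is exactly what is needed to justify the lemma as stated, so your write-up fills a step the paper leaves implicit. One cosmetic point: the lemma's notation $\Rr^*\times\Rr$ should be read as $\{q\neq 0\}$, i.e. $\Rr\times\Rr^*$, which is how you correctly interpret it.
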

 \begin{proof}
The relation \begin{equation}\label{presquefin}
\begin{split}
x^{2}+y^{2}&=\left(q^{k}\cos\left(2\pi\frac{p}{q}\right)\right)^{2}+\left(q^{k}\sin\left(2\pi \frac{p}{q}\right)\right)^{2}\\
           &=q^{2k} =z^{2k}.
\end{split}
\end{equation}
proves that $\phi_{k} $ is valued in $A_{2k} $. Hence so is $\underline{\phi_{k}}$.

Injectivity follows from the following two points:
\begin{itemize}
\item For any two $(p,q),(p',q')$ with $q,q'\ne0$,  $x(p,q)=y(p,q)$ if and only if $q=q'$ and $p=p'+nq$ for some $n\in\Nn$. 
\item $x(p,q)=y(p,q)=z(p,q)=0 $ implies $q=0$
\end{itemize}
Hence $\underline{\phi_k} (p,q) = \underline{\phi_k}(p',q')$ if and only if $(p,q) \sim (p',q') $.

\noindent
 Surjectivity follows easily from the surjectivity of $u\to(\cos{u},\sin{u})$ from $\Rr$~to~$S^1$.  
\end{proof}
 

%




\noindent
Both $\mathbb R^2\hspace*{-1.5pt} / \hspace*{-1.5pt}G$ and $A_{2k}$ come equipped with Poisson structures. We would like to state that $\underline{\phi_k} : \mathbb R^2\hspace*{-1.5pt} / \hspace*{-1.5pt}G \simeq A_{2k} $ is a Poisson isomorphism. 
But we have to be careful because there are several algebras of functions on $\mathbb R^2\hspace*{-1.5pt} / \hspace*{-1.5pt}G$. The algebra that allows to make this statement is $\mathcal A $, as we now state
(here $\mathcal{F}(A_{2k})= \frac{\mathbb R[x,y,z]}{x^2+y^2-z^{2k}}$ stands for the algebra of polynomial functions on $A_{2k} $):

\begin{prop}\label{jti}
The pull-back map of $\underline{\phi_{k}}:{\mathbb R}^2\hspace*{-1.5pt} / \hspace*{-1.5pt}G \to A_{2k}$ is a Poisson morphism
 $$C^\infty(\mathbb R^3)  \to C^{\floor{k/2}} (\mathbb R^2\hspace*{-1.5pt} / \hspace*{-1.5pt}G).$$
 It restricts to a Poisson
algebra isomorphism 
$\underline{\phi_k}^* :\mathcal{F}(A_{2k})\simeq \mathcal A$. 
\end{prop}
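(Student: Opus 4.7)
The plan is to establish three essentially independent claims: first, well-definedness of $\underline{\phi_k}^\ast$ as a map $C^\infty(\mathbb R^3) \to C^{\floor{k/2}}(\mathbb R^2\hspace*{-1.5pt}/\hspace*{-1.5pt}G)$; second, compatibility with Poisson brackets; and third, that the restriction to polynomials is an isomorphism $\mathcal F(A_{2k}) \simeq \mathcal A$. The construction \emph{forced} the brackets on the generators to match, so the surprise is really the regularity of the pullback at $q=0$.

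For the regularity claim, note that $F \circ \phi_k$ is manifestly $G$-invariant and smooth on $\{q \ne 0\}$, so the only issue is the line $\{q = 0\}$. I would write $F(x,y,z) = F(0,0,z) + R(x,y,z)$, where $R = O(x) + O(y)$ uniformly on compacts in $z$. The pullback of the first summand is $F(0,0,q) \in C^\infty(q)$. For the pullback of $R$, the key observation is that $x \circ \phi_k$ and $y \circ \phi_k$ vanish as $q^k$, so that $R \circ \phi_k$ is $O(q^k)$ uniformly in $p$. Each $\partial_p$ costs one power of $q^{-1}$ (from differentiating $\theta = 2\pi p/q$), while each $\partial_q$ costs at most two powers of $q^{-1}$ (via the factor $p/q^2$ obtained by differentiating $\theta$ in $q$); hence a mixed partial of total order $s$ is bounded by $|q|^{k-2s}$ times a polynomial in $p$ and a bounded oscillating factor. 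For $s \le \floor{k/2}$ this exponent is nonnegative, giving the desired continuous differentiability. This is essentially the phenomenon already encoded for $G$-invariant functions by the decay estimate of Lemma \ref{lem:alphank}.

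Once regularity is secured, the Poisson morphism property is almost automatic. For $F_1, F_2 \in C^\infty(\mathbb R^3)$, the chain rule yields, on the open set $\{q \ne 0\}$,
\[
\{F_1 \circ \phi_k,\, F_2 \circ \phi_k\} \;=\; \sum_{i,j \in \{x,y,z\}} \bigl((\partial_i F_1)\,(\partial_j F_2)\bigr)\!\circ\!\phi_k \cdot \{i \circ \phi_k,\, j \circ \phi_k\},
\]
and by the computation \eqref{eq:bracketsinA} the brackets $\{i \circ \phi_k,\, j \circ \phi_k\}$ coincide term by term with the pullback of the $A_{2k}$-brackets \eqref{hichem2}. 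Hence the right-hand side equals $\{F_1, F_2\}_{A_{2k}} \circ \phi_k$ on $\{q \ne 0\}$; both sides are continuous by the first step, so the identity extends to all of $\mathbb R^2$ by density.

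For the restriction to polynomials, surjectivity of $\underline{\phi_k}^\ast\colon \mathcal F(A_{2k}) \to \mathcal A$ is immediate from the very definition of $\mathcal A$. For injectivity, if $P \in \mathbb R[x,y,z]$ satisfies $P \circ \phi_k \equiv 0$ then $P$ vanishes on $\mathrm{Im}(\phi_k) = A_{2k}$ by Lemma \ref{afair}. For every $z_0 \ne 0$ the entire real circle $\{x^2+y^2 = z_0^{2k}\} \subset A_{2k}$ is Zariski-dense in its complex counterpart, so the union over $z_0$ is Zariski-dense in the complex surface $V_{\mathbb C}(x^2+y^2-z^{2k})$. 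The defining polynomial is irreducible in $\mathbb C[x,y,z]$ (write $u = x+iy$, $v = x-iy$, so that the equation becomes $uv - z^{2k}$, visibly irreducible), hence prime in $\mathbb R[x,y,z]$; by the complex Nullstellensatz $P$ lies in $(x^2+y^2-z^{2k})$, i.e.\ represents the zero class in $\mathcal F(A_{2k})$.

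The main obstacle is the regularity step: one has to bookkeep mixed partials of $F \circ \phi_k$ near $q=0$ carefully, for every term in the Taylor expansion of $F$, and justify the heuristic ``each $\partial_p$ loses one power of $q$, each $\partial_q$ loses two'' uniformly. It is precisely this budget that produces the $\floor{k/2}$ bound in the target space, and the appearance of polynomial-in-$p$ factors (harmless because $\mathcal U$ is relatively compact, fatal in unbounded situations) explains why one cannot hope for $C^\infty$ regularity globally.
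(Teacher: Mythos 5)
Your overall architecture is sound, and two of your three steps hold up well. The Poisson-morphism step (match the brackets of the generators via \eqref{eq:bracketsinA} against \eqref{hichem2}, then propagate through the chain rule on $\{q\neq 0\}$ and conclude by continuity) is in substance what the paper does, except that the paper packages the extension step in the commutative diagram \eqref{prsq1} rather than a density argument. Your third step is actually \emph{more} complete than the paper's: where the paper simply asserts that ``there is no other relation'' between $x,y,z$ beyond multiples of $x^2+y^2-z^{2k}$, you prove it --- vanishing on the image of $\phi_k$ (Lemma \ref{afair}), Zariski density of the real circles in the complex surface, irreducibility of $uv-z^{2k}$ after the substitution $u=x+iy$, $v=x-iy$, and the Nullstellensatz plus primality. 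That fills the thinnest point of the paper's own argument.

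The genuine gap is in your regularity step, precisely at the boundary case of your power count. From ``mixed partials of order $s$ are bounded by $|q|^{k-2s}$ times bounded factors'' you conclude \emph{continuous} differentiability for all $s\le\floor{k/2}$. When $k$ is even and $s=k/2$ the exponent is zero, and boundedness does not give continuity: take $k=2$ and $F=x$, so $F\circ\phi_k=q^{2}\cos\left(2\pi\frac{p}{q}\right)$ and $\frac{\partial}{\partial q}(F\circ\phi_k)=2q\cos\left(2\pi\frac{p}{q}\right)+2\pi p\,\sin\left(2\pi\frac{p}{q}\right)$, which has no limit as $q\to 0$ at any $p\neq 0$. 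So the pullback of a smooth function is in general \emph{not} of class $C^{\floor{k/2}}$ in the usual sense of continuous partial derivatives, and your claim, read literally, is false. What is true --- and what the paper actually means, since it defines $C^{k,\infty}(\mathbb R^2)$ by \emph{$\floor{k/2}$-times differentiable} and smooth off $\{q=0\}$ --- is pointwise differentiability up to order $\floor{k/2}$. Your estimates do yield this, but by a different inference: the partials of order $s-1$ are $O(|q|^{k-2s+2})=O(q^{2})$ near the line and vanish on it, so their increments from any point $(p_0,0)$ are $o$ of the distance, giving differentiability there with zero derivative. (Note also that bounding the order-$s$ partials on $\{q\neq0\}$ does not by itself establish that they \emph{exist} at $q=0$; it is this increment argument that does.) With that correction --- continuity of derivatives only up to orders $s$ with $k-2s\geq 1$, mere differentiability at order $\floor{k/2}$ --- your proof goes through. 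A shortcut worth noting: it suffices to run the power count on the three generators $x,y,z$, showing they lie in $C^{k,\infty}(\mathbb R^2)$, since composing a $\floor{k/2}$-times differentiable map $\mathbb R^2\to\mathbb R^3$ with a smooth $F$ remains $\floor{k/2}$-times differentiable; the Taylor splitting $F=F(0,0,z)+R$ is not needed.
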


\begin{proof}[Proof]
By Equation \eqref{presquefin}, the functions $x,y,z \in C^{k,\infty}(\mathbb R^2) $ satisfy  $x^2+y^2 = z^{2k} $.
Since there is no other relations between them, save for multiples of this relation, and since the generators $x,y,z $ of $\mathcal{F}(A_{2k}) $ are also subject to the relation  $x^2+y^2 = z^{2k} $, the algebra morphism $\underline{\phi_k}^*:\mathcal{F}(A_{2k})\hookrightarrow \mathcal A$ is an algebra isomorphism.
It is still necessary to show that it is a Poisson morphism. 

\noindent
A comparison of the Poisson bracket on $\mathcal{F}(A_{2k}) $ defined in \eqref{hichem2}
with the Poisson brackets of $x,y,z \in C^{k,\infty}(M)$ as computed in (\ref{eq:bracketsinA})
shows that $\phi_k: \mathbb R^2  \to \mathbb R^3  $ is a Poisson map.
In view of the following commutative diagram
\begin{equation}\label{prsq1}
\xymatrix{ \Rr^{2} \ar[r]^{\phi_k} \ar@{->>}[d]^{P} & \Rr^{3} \\ \Rr^{2}\hspace*{-1.5pt} / \hspace*{-1.5pt}G \ar[r]^{\underline{\phi_k}}& A_{2k} \ar@{^{(}->}[u]^{i} }
\hbox{ \begin{tabular}{c} \\  \\ \\  hence  \\ \end{tabular}}
\xymatrix{
    \mathbb R[x,y,z] \ar[r]^{{\phi_k}^{*}} \ar@{->>}[d]^{i^{*}} & C^{\floor{k/2}}(\Rr^{2}) \\
    \mathcal F (A_{2k}) \ar[r]^{\underline{\phi_k}^{*}} & C^{\floor{k/2}}(\Rr^{2}\hspace*{-1.5pt} / \hspace*{-1.5pt}G) \ar@{^{(}->}[u]^{P^{*}}}
\end{equation}
this implies that $\underline{\phi_k}^* $ is a Poisson morphism.

\end{proof}

\noindent For the quotient space ${\mathbb R}^2\hspace*{-1.5pt} / \hspace*{-1.5pt}G'$, we consider the subalgebra $\mathcal{A}'\subset C^{k}(\Rr^2\hspace*{-1.5pt} / \hspace*{-1.5pt}G')$ generated by 
\begin{equation*}
\left(x=q^{2k}\cos\left(2\pi\frac{p}{q}\right),  \,  y=q^{2k+1}\sin\left(2\pi\frac{p}{q}\right), \,  z=q^2\right), \, \mbox{for any integer } k\geq2.
\end{equation*} 

\begin{prop}
The algebra $\mathcal{A}'$ is stable under the Poisson bracket \eqref{eq:Poisson}.
\end{prop}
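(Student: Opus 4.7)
The plan is to use the fact that the Poisson bracket obeys the Leibniz rule, so that $\mathcal A'$ is stable under $\{\cdot,\cdot\}$ as soon as the three brackets $\{x,y\}$, $\{y,z\}$, $\{z,x\}$ of the generators lie in $\mathcal A'$. The computation of these three brackets is a direct application of the formula
\[ \{f,g\} = \frac{q}{2\pi}\left( \frac{\partial f}{\partial p}\frac{\partial g}{\partial q} - \frac{\partial f}{\partial q}\frac{\partial g}{\partial p}  \right). \]
First I would handle the two easy brackets. Since $z=q^2$ does not depend on $p$, the bracket $\{y,z\}$ reduces to $\frac{q}{2\pi}\cdot \partial_p y \cdot \partial_q z$, and a short calculation yields $\{y,z\}=2zx$. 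Symmetrically, $\{z,x\} = -\frac{q}{2\pi}\cdot \partial_q z \cdot \partial_p x = 2y$. Both already lie in $\mathcal A'$.

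The bracket $\{x,y\}$ is the one requiring care. Writing $c:=\cos(2\pi p/q)$ and $s:=\sin(2\pi p/q)$, the $p$-derivatives are $\partial_p x = -2\pi q^{2k-1}s$ and $\partial_p y = 2\pi q^{2k}c$, while the $q$-derivatives each produce two terms (a "power" term and a term involving $p$ from the inner derivative of $2\pi p/q$). When one forms $\partial_p x\,\partial_q y-\partial_q x\,\partial_p y$, the two $p$-dependent cross terms cancel, leaving the clean expression
\[ \{x,y\} = -q^{4k}\bigl( 2k + s^2 \bigr) = -2k\,z^{2k} - q^{4k}s^2. \]
The apparent obstacle is that the quantity $q^{4k}s^2$ is not visibly an element of $\mathcal A'$ (it would formally be $y^2/z$).

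The key step I would then invoke is the algebraic identity satisfied by the generators:
\[ zx^2+y^2 = q^2 \cdot q^{4k}c^2 + q^{4k+2}s^2 = q^{4k+2}(c^2+s^2) = z^{2k+1}, \]
which is exactly the defining equation of $D_{2k+1}$. This identity rewrites $q^{4k}s^2 = y^2/z$ in the polynomial form $q^{4k}s^2 = z^{2k} - x^2$, and substituting gives
\[ \{x,y\} = -(2k+1)z^{2k} + x^2, \]
which lies in $\mathcal A'$ and agrees exactly with the Poisson bracket \eqref{PoissonDk} on $D_{2k+1}$. This is a nice internal consistency check: the Poisson brackets of the generators realize the expected $D_{2k+1}$ Poisson structure, foreshadowing the analogue of Proposition~\ref{jti} for the group $G'$.

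Having verified the three brackets, the Leibniz rule together with bilinearity extends the property to all polynomial expressions in $x,y,z$, concluding that $\mathcal A'$ is closed under $\{\cdot,\cdot\}$. The main (minor) obstacle is purely computational, namely keeping track of the $p$-dependent cross terms in $\{x,y\}$ and recognizing that the $D_{2k+1}$ relation allows $q^{4k}s^2$ to be absorbed into $\mathcal A'$.
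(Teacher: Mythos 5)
Your proof is correct and takes essentially the same route as the paper: the paper's proof consists exactly of the direct computation of the three generator brackets $\{x,y\}=-(2k+1)z^{2k}+x^2$, $\{y,z\}=2zx$, $\{z,x\}=2y$, with stability of $\mathcal{A}'$ then following from bilinearity and the Leibniz rule. The extra detail you supply---using the defining relation $zx^2+y^2=z^{2k+1}$ to rewrite $q^{4k}\sin^2\left(2\pi\tfrac{p}{q}\right)$ as $z^{2k}-x^2$---is precisely the step the paper leaves implicit in its ``direct computation.''
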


\begin{proof}[Proof]
A direct computation with this bracket gives
\begin{equation*}
\left\{\begin{array}{rcll}
\{x,y\}&=&-(2k+1) z^{2k}+x^2 &\in \mathcal{A}'\\
          \{y,z\}&=&2z x&\in \mathcal{A}'\\
           \{z,x\}&=&2y &\in \mathcal{A}'.
 \end{array}\right.\end{equation*}
 This proves the result.
\end{proof}

\noindent Let us consider the map $ \mathbb R^2 \to \mathbb R^3$
defined by
 $$ \phi_k \colon \big(p,q\big) \mapsto \big(x(p,q),y(p,q),z(p,q)\big) .$$
 Since $x,y,z$ are $G'$-invariant functions, and are equal to $0$ on the straight line $ q=0$, this map goes to the quotient to define a map 
 $\underline{\phi_k} \colon \mathbb R^2\hspace*{-1.5pt} / \hspace*{-1.5pt}G' \to \mathbb R^3$.
Since $zx^2 + y ^2 = z^{2k+1} $, this map takes values in the Du Val singular surface $D_{2k+1} $ (see Section \ref{sec:quotient}). More precisely, it takes values in $D_{2k+1}^+ = D_{2k+1} \cap \{z \geq 0\} $

\noindent  
\begin{lem}
\label{afair2}
The map $ \bar{\phi_k}  \colon \mathbb R^2\hspace*{-1.5pt} / \hspace*{-1.5pt}G' \to D_{2k+1}^+$
 is one-to-one
 \end{lem}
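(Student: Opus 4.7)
The plan is to imitate the proof of Lemma \ref{afair}, with one extra case to handle the generator $\epsilon=-1$ of $G'$. First I would verify that the coordinate functions $x(p,q) = q^{2k}\cos(2\pi p/q)$, $y(p,q)=q^{2k+1}\sin(2\pi p/q)$, $z(p,q)=q^2$ are truly $G'$-invariant (the $n \in \mathbb{Z}$ action was already used; for $\epsilon=-1$, the parities $2k$, $2k+1$, $2$ conspire with the parities of $\cos$ and $\sin$ so that each of $x,y,z$ is preserved), hence $\phi_k$ descends to $\underline{\phi_k}: \Rr^2\hspace*{-1.5pt} / \hspace*{-1.5pt}G' \to \Rr^3$. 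Since $z=q^2 \geq 0$ and $zx^2+y^2 = q^{2}\cdot q^{4k}\cos^2 + q^{4k+2}\sin^2 = q^{4k+2} = z^{2k+1}$, the image lies in $D_{2k+1}^+$.

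For injectivity, suppose $\phi_k(p,q)=\phi_k(p',q')$. Comparing $z$-coordinates gives $q^2 = q'^2$, hence $q' = \pm q$. I would then split into three cases:
\begin{enumerate}
\item[(a)] If $q=q'=0$, then by definition of the closed relation $\sim'$ all points of the line $q=0$ are identified, so $(p,0) \sim' (p',0)$.
\item[(b)] If $q=q' \neq 0$, then since $q^{2k}\neq 0$ and $q^{2k+1}\neq 0$, the equalities $\cos(2\pi p/q)=\cos(2\pi p'/q)$ and $\sin(2\pi p/q)=\sin(2\pi p'/q)$ give $p'-p \in q\Zz$, i.e.\ $(p,q) \sim' (p',q')$ through the subgroup $G \subset G'$.
\item[(c)] If $q'=-q \neq 0$, first use the element $\epsilon = -1$ to replace $(p',q')=(p',-q)$ by the equivalent point $(p',q)$, and then apply case (b).
\end{enumerate}
In every case $(p,q)\sim'(p',q')$, so $\underline{\phi_k}$ is injective.

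The entire argument is essentially a bookkeeping exercise in parity and periodicity; there is no real obstacle beyond carefully checking the invariance of $x,y,z$ under $\epsilon=-1$, which hinges precisely on the choice of the exponents $2k$ and $2k+1$. No surjectivity claim needs to be established (the lemma as stated only asks for injectivity, which is consistent with the fact that points $(x,0,0)$ with $x \neq 0$ in $D_{2k+1}^+$ lie outside $\mathrm{Im}(\phi_k)$).
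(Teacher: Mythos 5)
Your injectivity argument is correct and is essentially the paper's own: the paper's proof of this lemma simply declares injectivity ``identical to that of Lemma~\ref{afair}'' (compare $z$-coordinates, then use periodicity of the angle), and your explicit verification of the $G'$-invariance of $x,y,z$ under $\epsilon=-1$ (the parity bookkeeping on the exponents $2k$, $2k+1$, $2$) together with the case split (a)--(c) is a careful write-up of exactly that reduction.

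The genuine gap is your final claim that ``no surjectivity claim needs to be established.'' In this paper ``one-to-one'' is used to mean \emph{bijective}: the proof of the template Lemma~\ref{afair} contains a surjectivity step (surjectivity of $u\mapsto(\cos u,\sin u)$ onto the circle), the paper's own proof of the present lemma says ``it remains to prove that $\bar{\phi_k}$ is bijective,'' and, decisively, Section~\ref{sec:resolution} uses this lemma to define $\underline{\psi_k}\colon D^+_{2k+1}\to\Rr^2/G'$ as the \emph{inverse} of $\bar{\phi_k}$ and composes it with the Du Val resolution; without surjectivity this inverse does not exist and the $G'$-half of the main theorem collapses. That said, your parenthetical observation is a real catch — it exposes a genuine imprecision in the paper, which wavers between $D_{2k+1}^+=D_{2k+1}\cap\{z>0\}$ (Section~\ref{sec:DuVal}) and $D_{2k+1}\cap\{z\geq 0\}$ (Section~\ref{sec:Quotient2}). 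Indeed the points $(x,0,0)$, $x\neq 0$, lie in $D_{2k+1}\cap\{z\geq0\}$ but not in the image, since any preimage would have $z=q^2=0$ and hence map to the origin. But the correct conclusion is to repair the target, not to weaken the claim: the lemma holds with $D_{2k+1}^+:=\left(D_{2k+1}\cap\{z>0\}\right)\cup\{(0,0,0)\}$, which is precisely the closure of $D_{2k+1}\cap\{z>0\}$, because on $D_{2k+1}$ with $z>0$ one has $y^2=z(z^{2k}-x^2)\geq 0$, hence $|x|\leq z^k$, so letting $z\to 0^+$ only the origin survives. Onto this set surjectivity follows exactly as in Lemma~\ref{afair}: given $(x,y,z)\in D_{2k+1}$ with $z>0$, set $q=\sqrt{z}$; then $\left(x/q^{2k}\right)^2+\left(y/q^{2k+1}\right)^2=(zx^2+y^2)/z^{2k+1}=1$, so there is $u$ with $x/q^{2k}=\cos u$ and $y/q^{2k+1}=\sin u$, and $p:=qu/2\pi$ gives $\phi_k(p,q)=(x,y,z)$; the origin is the image of the class of the line $q=0$. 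Your proof needs this surjectivity statement (together with the corrected definition of $D_{2k+1}^+$) to support the role the lemma plays in the paper.
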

 \begin{proof}
The relation \begin{equation}\label{presquefin1}
\begin{split}
zx^{2}+y^{2}&=\left(q^{2k+1}\cos\left(2\pi\frac{p}{q}\right)\right)^{2}+\left(q^{2k+1}\sin\left(2\pi k\frac{p}{q}\right)\right)^{2}\\
           &=q^{4k+2} =z^{2k+1}
\end{split}
\end{equation}
proves that $\phi_{k} $ is valued in $D_{2k+1} $. Since $ q^2 \geq 0$, it takes indeed values in $D_{2k+1}^+$. Hence so is $\bar{\phi_k}$. It remains to prove that $\bar{\phi_k}$ is bijective. The proof is identical to that of the Lemma \ref{afair}.
\end{proof}

\noindent As for $\Rr^2\hspace*{-1.5pt} / \hspace*{-1.5pt}G$ and $A_{2k} $, since $\Rr^2\hspace*{-1.5pt} / \hspace*{-1.5pt}G'$ and $D_{2k+1}^+$ are in bijection, and since they are also Poisson sets, we would like to assert that $\bar{\phi_k}$ is a Poisson isomorphism. As for $\Rr^2\hspace*{-1.5pt} / \hspace*{-1.5pt}G$ and $A_{2k} $, the precise statement will use the subalgebra $\mathcal{A}'\subset C^{k}(\Rr^2\hspace*{-1.5pt} / \hspace*{-1.5pt}G')$.

\begin{prop}\label{jti1}
The pull-back map of $\bar{\phi_{k}}:{\mathbb R}^2\hspace*{-1.5pt} / \hspace*{-1.5pt}G' \to D_{2k+1}^+$ is a Poisson morphism:
 $$C^\infty(\mathbb R^3) \to C^{\floor{k\hspace*{-1.5pt} / \hspace*{-1.5pt}2}} (\mathbb R^2\hspace*{-1.5pt} / \hspace*{-1.5pt}G). $$
It restrict to a Poisson algebra isomorphism 
$\bar{\phi_k}^* :\mathcal{F}(D_{2k+1})\simeq \mathcal A'$. 
\end{prop}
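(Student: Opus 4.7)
My plan is to mirror the proof of Proposition \ref{jti} nearly verbatim, with the substitutions $A_{2k}\rightsquigarrow D_{2k+1}^+$, $G\rightsquigarrow G'$, and the defining relation $x^2+y^2=z^{2k}\rightsquigarrow zx^2+y^2=z^{2k+1}$ (this last identity is precisely \eqref{presquefin1}). The first step is to establish that $\bar{\phi_k}^*$ is an algebra isomorphism on the polynomial level. The generators $x,y,z$ of $\mathcal A'$ satisfy the relation $zx^2+y^2=z^{2k+1}$ by \eqref{presquefin1}; arguing --- as in the proof of \ref{jti} --- that this is the only polynomial relation among them (which one justifies by observing that $\phi_k(\Rr^2)=D_{2k+1}^+$ is Zariski-dense in the irreducible surface $\{zx^2+y^2-z^{2k+1}=0\}\subset\Rr^3$), I conclude that $\phi_k^*$ descends to an isomorphism $\bar{\phi_k}^*:\mathcal F(D_{2k+1})\xrightarrow{\sim}\mathcal A'$.

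The second step verifies Poisson-compatibility on the generators. The Pichereau formula \eqref{PoissonDk} on $\mathcal F(D_{2k+1})$ yields
\[
\{x,y\}=-(2k+1)z^{2k}+x^2,\qquad \{y,z\}=2zx,\qquad \{z,x\}=2y,
\]
and these are exactly the brackets already computed in the excerpt for the functions $x=q^{2k}\cos(2\pi p/q)$, $y=q^{2k+1}\sin(2\pi p/q)$, $z=q^2$ with respect to $\{p,q\}=q/(2\pi)$. Since the Poisson bracket on $\Rr^3$ defined by \eqref{eq:Pichereau} is determined via the Leibniz rule by its values on the coordinate functions, this shows that $\phi_k:\Rr^2\to\Rr^3$ is a Poisson map, and hence that $\phi_k^*:C^\infty(\Rr^3)\to C^{\floor{k/2}}(\Rr^2)$ is a morphism of Poisson algebras.

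Finally, the commutative diagram analogous to \eqref{prsq1}, with $P:\Rr^2\twoheadrightarrow\Rr^2/G'$, $\bar{\phi_k}:\Rr^2/G'\to D_{2k+1}^+$ and $i:D_{2k+1}^+\hookrightarrow\Rr^3$, combined with the $G'$-invariance of $x,y,z$, descends the Poisson-morphism property from $\phi_k^*$ to $\bar{\phi_k}^*:C^\infty(\Rr^3)\to C^{\floor{k/2}}(\Rr^2/G')$, and its restriction to $\mathcal F(D_{2k+1})$ is the isomorphism from Step~1. The only genuine subtlety, relative to the $A_{2k}$-case, is in Step~1: here $\phi_k(\Rr^2)=D_{2k+1}^+$ is only a \emph{half} of $D_{2k+1}$ (since $z=q^2\geq 0$), so one must invoke the Zariski-density of this half in the full irreducible surface to conclude that the ideal of polynomial relations among $x,y,z$ is exactly $(zx^2+y^2-z^{2k+1})$. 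Beyond this point, the argument is a mechanical transcription of the proof of Proposition \ref{jti}.
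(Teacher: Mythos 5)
Your proposal is correct and follows essentially the same route as the paper: the paper's own proof of Proposition \ref{jti1} consists of the single sentence that it is identical to the proof of Proposition \ref{jti}, and your three steps (the algebra isomorphism coming from the single relation $zx^2+y^2=z^{2k+1}$, the comparison of the Pichereau brackets \eqref{PoissonDk} with the computed brackets of the generators of $\mathcal{A}'$, and the descent through the diagram analogous to \eqref{prsq1}) are exactly that transcription. Your Zariski-density argument for why $(zx^2+y^2-z^{2k+1})$ is the full ideal of relations --- needed because $\phi_k(\Rr^2)$ is only the half $D_{2k+1}^+$ of the surface --- addresses a point the paper passes over in silence (already in Proposition \ref{jti} the absence of further relations is merely asserted), so your write-up is in fact slightly more careful than the original.
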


\noindent The proof of this Proposition is identical to that of the Proposition \ref{jti}.

\section{Symplectic resolutions of the quotients $\mathbb R^2\hspace*{-1.5pt} / \hspace*{-1.5pt}G $ and $\mathbb R^2\hspace*{-1.5pt} / \hspace*{-1.5pt}G' $}\label{sec:resolution}

\noindent We first deal with the case of $G$. We use the notations of the previous section.

\noindent The map $ \underline{\phi_k}: \Rr^2\hspace*{-1.5pt} / \hspace*{-1.5pt}G \to A_{2k}$ is bijective by Lemma \ref{afair}. 

\begin{defi}\label{def:nota}
We denote by $\psi_k :A_{2k} \to \Rr^2\hspace*{-1.5pt} / \hspace*{-1.5pt}G $ its inverse map of $ \underline{\phi_k}: \Rr^2\hspace*{-1.5pt} / \hspace*{-1.5pt}G \to A_{2k}$,
and by $\varphi:Z_{k} \to A_{2k}$ the usual symplectic resolution of $A_{2k}$ (see Section \ref{sec:DuVal}). 
\end{defi}

Corollary \ref{coronYn} will now be used to show the very surprising following result:

\begin{prop}\label{jiti}
For every $k \in \mathbb N$, the map $\psi_k  \, \circ \, \varphi$ is a smooth map from $(Z_{k},\{\cdot, \cdot\}_{Z_{k}})$ to $(\Rr^2\hspace*{-1.5pt} / \hspace*{-1.5pt}G, \{\cdot,\cdot\}) $, \emph{i.e.} for every smooth function $F \in C^\infty(\Rr ^2\hspace*{-1.5pt} / \hspace*{-1.5pt}G)$, its pull-back $(\psi_k\circ\varphi)^*F$ is a smooth function on $Z_k$.
\end{prop}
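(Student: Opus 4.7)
The plan is to expand any $F \in C^\infty(\mathbb R^2/G)$ in the Fourier series of Proposition \ref{refr}, substitute the smooth lifts supplied by Corollary \ref{coronYn} for the trigonometric summands, and then invoke the rapid decay of Lemma \ref{lem:alphank} to establish convergence in the $C^\infty$-topology on $Z_k$.

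First, Proposition \ref{refr} gives
$$F(p,q) \;=\; \alpha_0(q) + \sum_{n \geq 1} \bigl( \alpha_n(q) \cos(2\pi n p/q) + \beta_n(q) \sin(2\pi n p/q) \bigr).$$
A direct induction on the recurrence \eqref{eq:xnyn} shows that, under $\underline{\phi_k}$, $x_n/z^k = \cos(2\pi n p/q)$ and $y_n/z^k = \sin(2\pi n p/q)$ on $A_{2k} \setminus \{0\}$. Since $\alpha_n, \beta_n$ vanish together with all their derivatives at $q=0$, the functions $\widetilde\alpha_n := \alpha_n/q^k$ and $\widetilde\beta_n := \beta_n/q^k$ extend smoothly to $\mathbb R$. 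Thus on $Z_k \setminus \varphi^{-1}(0)$,
$$(\psi_k \circ \varphi)^* F \;=\; \alpha_0(Z) + \sum_{n \geq 1} \bigl( \widetilde\alpha_n(Z)\, X_n + \widetilde\beta_n(Z)\, Y_n \bigr),$$
with $Z := \varphi^* z$ and $X_n, Y_n$ smooth on all of $Z_k$ by Corollary \ref{coronYn}. Each individual term of this series is therefore smooth on the whole of $Z_k$.

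The crucial step is to verify that the series converges in $C^\infty(Z_k)$. On the coefficient side, each $\widetilde\alpha_n^{(i)}(q)$ is, by Taylor's formula, a linear combination of terms $\alpha_n^{(j)}(q)/q^{k+i-j}$ with $j \leq i$. By Lemma \ref{lem:alphank} applied with its second index taken arbitrarily large, and absorbing the extra powers of $q$ on bounded intervals, $|\widetilde\alpha_n^{(i)}|$ and $|\widetilde\beta_n^{(i)}|$ decrease uniformly on compacts faster than any inverse polynomial in $n$. On the resolution side, Proposition \ref{coco3} provides smooth functions $U := \varphi^*(x/z^k)$ and $V := \varphi^*(y/z^k)$ on $Z_k$ satisfying $U^2 + V^2 = 1$, and the recurrence \eqref{eq:xnyn} identifies $X_n/Z^k$ and $Y_n/Z^k$ with the real and imaginary parts of $(U + iV)^n$, i.e.\ with polynomials of degree $n$ in $U, V$. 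Consequently, in any local chart on $Z_k$ and for any multi-index $\mu$, $|\partial^\mu X_n|$ and $|\partial^\mu Y_n|$ grow at most polynomially in $n$ on compacts. Combining these two estimates via Leibniz, every term-by-term partial derivative of the series is dominated on each compact by $\mathrm{poly}(n) \cdot n^{-k'}$, which is summable once $k'$ is chosen large enough; the series thus converges to a function $\widetilde F \in C^\infty(Z_k)$.

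Finally, the smooth limit $\widetilde F$ and the \emph{a priori} only continuous function $(\psi_k \circ \varphi)^* F$ coincide on the open dense subset $Z_k \setminus \varphi^{-1}(0)$, hence agree everywhere on $Z_k$, which proves $(\psi_k \circ \varphi)^* F$ is smooth. The main obstacle I anticipate is precisely the uniform $C^\infty$-convergence near the exceptional divisor: it requires tightly balancing the polynomial-in-$n$ growth of the derivatives of the lifted Chebyshev-type functions $X_n, Y_n$ against the super-polynomial decay of the Fourier coefficients $\widetilde\alpha_n, \widetilde\beta_n$ supplied by Lemma \ref{lem:alphank}, uniformly in every direction on $Z_k$.
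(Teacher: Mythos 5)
Your proposal is correct and follows essentially the same route as the paper: expand $F$ via Proposition \ref{refr}, replace each Fourier mode by $\widetilde{\alpha}_n(Z)\,X_n + \widetilde{\beta}_n(Z)\,Y_n$ using Corollary \ref{coronYn}, and balance the super-polynomial decay from Lemma \ref{lem:alphank} against the polynomial growth in $n$ of the derivatives of $X_n, Y_n$ to get convergence in $C^\infty$. The only real difference is how that growth estimate is obtained: the paper runs an induction on the order of differentiation through the recursion \eqref{eq:xnyn}, whereas you read it off from the closed form $X_n + iY_n = Z^k (U+iV)^n$ with $U^2+V^2=1$ (Proposition \ref{coco3}) --- a cleaner derivation of the same bound.
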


\begin{proof}[Proof]
 Let $F \in C^\infty(\Rr ^2\hspace*{-1.5pt} / \hspace*{-1.5pt}G)$. By Proposition \ref{refr}, we have 
$$ F(p,q) = \alpha_{0}(q)+\sum_{n\geq 1}\alpha_{n}(q)\cos\left(2n\pi\frac{p}{q}\right)+\sum_{n\geq1}\beta_{n}(q)\sin\left(2n\pi\frac{p}{q}\right) $$
for some smooth functions $(\alpha_{n})_{n\geq 1}$ and $(\beta_{n})_{n\geq 1}$ on $ \mathbb R$ that vanish at $0$ together with all their derivatives, and some smooth function $\alpha_{0}$ (all unique).

\noindent We wish to define a function $ H $ on $ Z_{k} $ by
\begin{equation}\label{eq:defH} H := \alpha_{0}(Z)+\sum_{n\geq 1} \frac{\alpha_{n}(Z)}{Z^k}   X_n +\sum_{n\geq1}
\frac{\beta_{n}(Z)}{Z^k}  Y_n\end{equation}
where the functions $ X_n,Y_n \in C^\infty(Z_k)$ are as in Corollary \ref{coronYn} and $ Z= \varphi^* z=q$.
This needs some justification.
\begin{enumerate}
\item All terms of both series defining $H $ (see \eqref{eq:defH}) are smooth:
\begin{enumerate}
\item Corollary \ref{coronYn} states that $X_n,Y_n $ are smooth functions for all $n \geq 1$. 
\item $Z = \varphi^* z$ is also a smooth function.
\item Last, since the functions $\alpha_n, \beta_n $ vanish at $0$ together with all their derivatives, the quotients 
$$ \alpha_n(z) / z^n, \beta_n (z) / z^n $$
are smooth functions on $\mathbb R $. In view of (b),  $\alpha_n(Z) / Z^n, \beta_n (Z) / Z^n $ are also smooth functions.
\end{enumerate}
\item Both series are convergent, together with all their partial derivatives.
We will come back to this point at the end the proof.
\end{enumerate}

Now that the existence and the smoothness of the function $H$ is established, let us check that 
\begin{equation} \label{eq:tirearriere} (\psi_k \circ \varphi)^* F = H  .\end{equation} 
It suffices to check that the pull-back through $(\psi_k \circ \varphi)^*$ of each one of the terms of the series defining $F$ are the terms of both series defining $H$.
Consider the following smooth functions on $\mathbb R^3 $
$$f_n(x,y,z):=\frac{\alpha_{n}(z)}{z^k} x_n  \hbox{ and } g_n(x,y,z) := \frac{\beta_{n}(z)}{z^k} y_n  $$
where the functions $x_n,y_n$ are as in (\ref{eq:xnyn}).
We have
$$ \left\{ \begin{array}{rcll}
 \varphi^* f_n &= &    \frac{\alpha_{n}(Z)}{Z^n} X_n & \hbox{by definition of $X_n$ (see Cor. \ref{coronYn}) } \\
  \underline{\phi_k}^* f_n & = & \alpha_{n}(q)\cos\left(2n\pi\frac{p}{q}\right)  & \hbox{by definition of $\underline{\phi_k} $} 
\end{array} \right. $$
and 
$$ \left\{ \begin{array}{rcll}
 \varphi^* g_n &= &   \frac{\beta_{n}(Z)}{Z^n} Y_n & \hbox{by definition of $Y_n$ (see Cor. \ref{coronYn}) } \\
  \underline{\phi_k}^* g_n & = & \beta_{n}(q)\sin \left(2n\pi\frac{p}{q}\right)  & \hbox{by definition of $\underline{\phi_k} $} 
\end{array} \right. $$
Hence
 \begin{align*}
 \begin{array}{rcl}
     \varphi^* \circ \psi_k^* \left( \alpha_{n}(q)\cos \left(2n\pi\frac{p}{q}\right)\right) &=&\alpha_{n}(Z) X_n  \\ \varphi^* \circ \psi_k^* \left(  \beta_{n}(q)\, \sin \left(2n\pi\frac{p}{q}\right)\right) &=& \beta_n (Z) \, Y_n\end{array}
 \end{align*} 
 \noindent Summing up these identities, we obtain \eqref{eq:tirearriere}.



\noindent To complete the proof, we need to prove uniform convergence of the series \eqref{eq:defH} defining $ H $, and uniform convergence of all the series obtained by applying a given partial differential operator to all their terms.  

 \noindent
Let $C \subset Z_k $ be a compact neighborhood of $\phi^{-1}(0) $. 
Pulling-back the relation
 $$ x_n^2+y_n^2=z^{2k} $$
 we obtain that  
 \begin{equation}\label{eqXn} X_n^2 + Y_n^2 = Z^{2k} \end{equation}
 so that $X_n,Y_n $ is bounded on $C$ by some constant $k = {\mathrm{max}}_C |Z| $ that does not depend on $n$.

 \noindent
Now let us consider $\varphi_k^{-1} ( \phi(C)) \subset \mathbb R^2  $. 
This is a neighborhood of the horizontal line in $\mathbb R^2 $, contained into a band $\mathcal B $ of the form $ - M <q< M $ for some $M \in \mathbb R $. 
In view of Lemma \ref{lem:alphank}, there exists a constant $K$ such that 
$| \alpha_n(q)/q^k |< K / n^k $ and $| \beta_n(q)/q^k |< K / n^k $ for all $ -M<q<M $, so that 
$$ \left| \frac{\alpha_n(Z)}{Z^k} \right| < \frac{K}{n^k} \hbox{ and } \left| \frac{\alpha_n(Z)}{Z^k} \right| < \frac{K}{n^k}  $$
uniformly on $C$. This proves the uniform convergence on $C$ of the series \eqref{eq:defH} defining~$H$.

 \noindent
To prove that $H$ is smooth, it suffices to prove that the series that define $D(H)$ converge for every partial derivative operator $D$ on $Z$.

 \noindent
 For every partial derivative operator $D$ of order $d$, we derive from the recursion relation \eqref{eq:xnyn} that
  $$ \begin{pmatrix} D(X_{n+1})\\ D (Y_{n+1}) \end{pmatrix} = \begin{pmatrix} X_1& -Y_1\\ Y_1 & X_1 \end{pmatrix} \begin{pmatrix} D (X_n) \\ D(Y_n) \end{pmatrix}  +  \begin{pmatrix} L(X_n,Y_n) \\ M(X_n,Y_n) \end{pmatrix} $$
where $L(X,Y),M(X,Y)$ depend linearly on a finite number of partial derivatives of order $\leq d-1 $ of $X,Y $. 
Hence 
$$ U_{n}^D :=  (  D(X_{n}))^2 + ( D(Y_{n}))^2$$
satisfies 
\begin{equation}\label{eq:Un} U_{n+1}^D = U_n^D + S^D(X_n,Y_n) \end{equation}
 where $S^D(X,Y)$ depend linearly on a finite number of partial derivatives of order $\leq d-1 $ of $X,Y $.

 \noindent
 We claim that for any partial derivative operator $D$ of degree $d$, there exists constants $A,B $ such that 
 $ U_n^D \leq  A +  B n^D $.
 For $d=0$, Equation \eqref{eqXn} implies the desired assumption. 
 If this is true for all $D$ of degree $\leq d-1 $, then in
 Equation \eqref{eq:Un}, there exists $A,B$ such that
  $$ |S^D(X_n,Y_n)| \leq A + B n^{d-1} $$
Equation \eqref{eq:Un}   
implies then that there exists constants $A',B' $ such that 
 $ U_n^D \leq  A' +  B' n^d $.
 
 \noindent
The convergence to zero of the $d$-first derivatives of $|\alpha_n(Z)/Z^k|, |\beta_n(Z)/Z^k| $ can be
 made smaller that $\frac{K}{n^r} $ for an arbitrary $r$, uniformly on $\phi^{-1}(C) $. This proves the uniform convergence of  the series obtained by applying  $D$ to all the terms in the series \eqref{eq:defH}. This proves the smoothness of its limits, and therefore the smoothness of $H$.


\end{proof}

\noindent 
We recapitulate in the following diagram the most important maps used so far
\begin{equation}
\xymatrix{ \Rr^{2} \ar[r]^{\phi_k} \ar@{->>}[d]^{P} & \Rr^{3}  & Z_{k} \ar[dl]^{\varphi} \\ \Rr^{2}\hspace*{-1.5pt} / \hspace*{-1.5pt}G \ar@/^1.0pc/
[r]^{\underline{\phi_k}}& A_{2k} \ar@/^1.0pc/[l]^{\underline{\psi_k}}  \ar@{^{(}->}[u]^{i} } 
\end{equation}
and we conclude with the main result of this article:

\begin{thm}\label{resultatp}
For every $k \in \mathbb N$, the map $\psi_k \circ \varphi: Z_{k} \rightarrow \Rr^{2}\hspace*{-1.5pt} / \hspace*{-1.5pt}G$ is a symplectic resolution of  $(\Rr^{2}\hspace*{-1.5pt} / \hspace*{-1.5pt}G,\{\cdot,\cdot \}) $.
\end{thm}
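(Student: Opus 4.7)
The plan is to verify the four conditions of Definition \ref{def:Lassoued} for the map $\varphi' := \psi_k \circ \varphi : Z_k \to \Rr^2\hspace*{-1.5pt} / \hspace*{-1.5pt}G$, given that $(Z_k,\pi_k)$ is already symplectic by Proposition \ref{prop:sympresolA2k}.

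Condition \emph{(i)} is exactly Proposition \ref{jiti}. For \emph{(iii)}, set $U := \{(p,q) \in \Rr^2 \mid q \neq 0\}$: this is an open dense $G$-saturated subset (the action preserves $q$), and when $q \neq 0$ the equation $p+nq=p$ forces $n=0$, so the $G$-action on $U$ is free; discreteness and properness are then immediate. For \emph{(iiii)}, I would combine Proposition \ref{prop:sympresolA2k}\emph{(1)} with Lemma \ref{afair}: the first gives a diffeomorphism $\varphi : Z_k \setminus \varphi^{-1}(0) \to A_{2k}\setminus\{0\}$, and the second gives a diffeomorphism $\underline{\phi_k} : U/G \to A_{2k}\setminus\{0\}$, so $\psi_k = \underline{\phi_k}^{-1}$ is a diffeomorphism off the origin. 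Since $\varphi'^{-1}(U/G) = Z_k \setminus \varphi^{-1}(0)$, the composition $\varphi'$ restricts to a diffeomorphism there.

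The only substantial point is condition \emph{(ii)}, namely that $\varphi'^*$ is a Poisson morphism from $C^\infty(\Rr^2\hspace*{-1.5pt} / \hspace*{-1.5pt}G)$ to $C^\infty(Z_k)$. For $F, F' \in C^\infty(\Rr^2\hspace*{-1.5pt} / \hspace*{-1.5pt}G)$, Proposition \ref{jiti} makes $H := \varphi'^*F$ and $H' := \varphi'^*F'$ smooth on $Z_k$, so both $\{H,H'\}_{Z_k}$ and $\varphi'^*\{F,F'\}$ are smooth functions on $Z_k$. My strategy is to check their equality on the dense open subset $Z_k \setminus \varphi^{-1}(0)$ and conclude by continuity. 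On that open set, $\varphi'$ factors as $Z_k \setminus \varphi^{-1}(0) \xrightarrow{\varphi} A_{2k}\setminus\{0\} \xrightarrow{\psi_k} U/G$. The first map is a Poisson diffeomorphism by Proposition \ref{prop:sympresolA2k}\emph{(2)}. The second is Poisson because the bracket relations \eqref{eq:bracketsinA} satisfied on $\Rr^2$ by the generators $x,y,z$ coincide with the brackets \eqref{hichem2} defining the Poisson structure on $A_{2k}$, so $\phi_k : \Rr^2 \to \Rr^3$ is a Poisson map; passing to the quotient on $U$ makes $\underline{\phi_k}|_{U/G}$ a Poisson diffeomorphism, and hence so is its inverse $\psi_k|_{A_{2k}\setminus\{0\}}$.

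The anticipated obstacle is precisely the passage across the singular fiber $\varphi^{-1}(0)$, where neither $\varphi$ nor $\psi_k$ is a diffeomorphism and the Poisson structure on $A_{2k}$ degenerates. However, thanks to Proposition \ref{jiti}, both sides of the Poisson identity are already known to be smooth on all of $Z_k$, so their coincidence on a dense open subset extends everywhere by continuity. This completes the check of the four conditions of Definition \ref{def:Lassoued} and establishes that $(Z_k,\pi_k,\varphi')$ is a symplectic resolution of $(\Rr^2\hspace*{-1.5pt} / \hspace*{-1.5pt}G,\{\cdot,\cdot\})$.
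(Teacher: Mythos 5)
Your proposal is correct and follows essentially the same route as the paper: both proofs verify the four conditions of Definition \ref{def:Lassoued} by invoking the smoothness result (Proposition \ref{jiti}) for condition \emph{(i)}, the Poisson compatibility (Proposition \ref{jti}) together with Proposition \ref{prop:sympresolA2k} for condition \emph{(ii)}, and Lemma \ref{afair} with Proposition \ref{prop:sympresolA2k} for conditions \emph{(iii)} and \emph{(iiii)}. If anything, your explicit density-plus-continuity argument for condition \emph{(ii)} (both sides smooth on $Z_k$ by Proposition \ref{jiti}, equal off the exceptional fiber since $\psi_k \circ \varphi$ is there a composition of Poisson diffeomorphisms) carefully fills in a step that the paper's one-line citation leaves implicit, since Proposition \ref{jti} by itself only covers the subalgebra $\mathcal A \simeq \mathcal F(A_{2k})$ rather than all of $C^\infty(\Rr^2\hspace*{-1.5pt} / \hspace*{-1.5pt}G)$.
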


\begin{proof}[Proof]
Let us check that all four conditions in Definition \ref{def:Lassoued} are satisfied.
Condition \emph{(i)} has been established in Proposition \ref{jti}.
Condition \emph{(ii)} follows from Proposition \ref{jti} and the first item in Proposition \ref{prop:sympresolA2k}.
Conditions \emph{(iii)} and  \emph{(iiii)} hold in view of Lemma \ref{afair} and the second item in Proposition~\ref{prop:sympresolA2k}.
%
%
%
\end{proof}
\noindent

\noindent For the quotient space ${\mathbb R}^2\hspace*{-1.5pt} / \hspace*{-1.5pt}G'$, the map $ \bar{\phi_k}: \Rr^2\hspace*{-1.5pt} / \hspace*{-1.5pt}G' \to D_{2k+1}^+$ is bijective by Lemma \ref{afair2}. Let us call $\underline{\psi_k} :D^+_{2k+1} \to \Rr^2\hspace*{-1.5pt} / \hspace*{-1.5pt}G' $ its inverse map. We denote by $\varphi:Z_{k+2}^+ \to D_{2k+1}^+$ the usual symplectic resolution of $D_{2k+1}^+$, see Section \ref{sec:DuVal}.

\begin{prop}
For every $k \in \mathbb N$, the map $\underline{\psi_k} \circ \varphi$ is a smooth map from $(Z_{k+2}^+,\{\cdot, \cdot\}_{Z^+_{k+2}})$ to $(\Rr^2\hspace*{-1.5pt} / \hspace*{-1.5pt}G', \{\cdot,\cdot\}) $.
\end{prop}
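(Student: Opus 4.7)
The plan is to adapt the proof of Proposition \ref{jiti} to the $G'$-setting. Given $F \in C^\infty(\mathbb R^2\hspace*{-1.5pt} / \hspace*{-1.5pt}G')$, I would first apply Proposition \ref{refr1} to expand
$$F(p,q) = \alpha_0(q) + \sum_{n\geq 1}\alpha_n(q)\cos\!\left(\tfrac{2n\pi p}{q}\right) + \sum_{n\geq 1}\beta_n(q)\sin\!\left(\tfrac{2n\pi p}{q}\right),$$
with $\alpha_n$ even, $\beta_n$ odd, and both flat at $q=0$ for $n \geq 1$. Exploiting parity, I write $\alpha_n(q) = \tilde\alpha_n(z)$ and $\beta_n(q) = q\,\tilde\beta_n(z)$ with $z = q^2$, where $\tilde\alpha_n, \tilde\beta_n$ are smooth on $[0,\infty)$ and, for $n \geq 1$, also flat at $0$. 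Consequently the quotients $\tilde\alpha_n(Z)/Z^k$ and $\tilde\beta_n(Z)/Z^k$ pull back to smooth functions on $Z_{k+2}$, just as in the $G$-case.

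I would then construct a candidate smooth lift $H$ on $Z_{k+2}$ as a series analogous to \eqref{eq:defH}, built from the smooth pullbacks $X_n = \varphi^*x_n$, $Y_n = \varphi^*y_n$, $Z = \varphi^*z$ of Corollary \ref{coronYn23} and the smooth flat coefficients obtained from $\tilde\alpha_n, \tilde\beta_n$. The precise form of the expansion is dictated by the closed-form pullback identity (verified inductively from \eqref{eq:xnyn1})
$$\phi_k^*(x_n + iy_n) \; = \; q^{2k-n+1}\bigl(\cos(2\pi p/q) + iq\sin(2\pi p/q)\bigr)^n,$$
or, equivalently, by the relations $x/z^k = \cos(2\pi p/q)$ and $y/z^k = q\sin(2\pi p/q)$ valid on the smooth locus of $D_{2k+1}^+$ together with Chebyshev-type expansions of $\cos(nu), \sin(nu)$. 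This identifies each Fourier mode of $F$ with an appropriate smooth combination on $Z_{k+2}$, and a termwise verification then yields the formal equality $(\underline{\psi_k}\circ\varphi)^* F = H$.

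The main obstacle is to show that the series defining $H$ converges uniformly together with all its partial derivatives on compact subsets of $Z_{k+2}$. Two ingredients are needed. First, a $G'$-analog of Lemma \ref{lem:alphank}, obtained by iterated integration by parts (with parity handling the boundary terms), provides, for every $i \geq 0$ and every $r \geq 1$, a bound $|\tilde\alpha_n^{(i)}(z)/z^k|, |\tilde\beta_n^{(i)}(z)/z^k| \leq K_r/n^r$ uniformly on compacts of $[0,\infty)$. Second, one must bound the growth in $n$ of $X_n, Y_n$ and their derivatives on compacts of $Z_{k+2}$, via the complex recursion $X_n + iY_n = (W+iV)^{n-1}(X_1 + iY_1)$ with $W = \varphi^*(x/z^{k+1/2}), V = \varphi^*(y/z^{k+1/2})$ satisfying the pulled-back relation $ZW^2 + V^2 = 1$; derivatives of $X_n, Y_n$ are then bounded by induction on the order, patterned on the $U_n^D$-argument at the end of the proof of Proposition \ref{jiti}.

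The subtlety — and what I expect to be the hardest part — is that, unlike in the $A_{2k}$-case where $W^2 + V^2 = 1$ gave an $n$-independent bound on $|X_n|, |Y_n|$, the $D_{2k+1}$-relation only forces $|V|\leq 1$ while allowing $|W+iV|$ to exceed $1$ near the exceptional divisor, so that the recursion could a priori produce at-most-exponential growth of $|X_n|, |Y_n|$. Overcoming this requires exploiting the infinite order of vanishing of $\tilde\alpha_n, \tilde\beta_n$ at $0$ — which forces each term of $H$ to be concentrated away from the exceptional divisor where $|W+iV|$ is smaller — in order to absorb the geometric growth; taking $r$ sufficiently large in the Fourier decay estimate then dominates the resulting bounds and yields uniform convergence of $H$ and all its derivatives, establishing the smoothness of $H = (\underline{\psi_k}\circ\varphi)^* F$ and completing the proof.
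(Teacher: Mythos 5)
Your proposal follows the route the paper itself takes: the paper's proof of this proposition consists of a single sentence declaring it identical to that of Proposition \ref{jiti}, with Proposition \ref{refr1} substituted for Proposition \ref{refr} and Corollary \ref{coronYn23} for Corollary \ref{coronYn}. Your parity step $\alpha_n(q)=\tilde\alpha_n(q^2)$, $\beta_n(q)=q\,\tilde\beta_n(q^2)$ is exactly the adjustment that this substitution silently requires, and your closed-form identity $\phi_k^*(x_n+iy_n)=q^{2k-n+1}\bigl(\cos(2\pi p/q)+iq\sin(2\pi p/q)\bigr)^n$ for the functions of \eqref{eq:xnyn1} is correct. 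But that identity undermines the two things you build on it. First, your two descriptions of the expansion are \emph{not} equivalent: with \eqref{eq:xnyn1} as written, $\phi_k^*x_n$ and $\phi_k^*y_n$ are not the Fourier modes $q^{2k}\cos(2n\pi p/q)$, $q^{2k+1}\sin(2n\pi p/q)$; worse, taking $p=0$ gives $\phi_k^*x_n=q^{2k+1-n}$, which is unbounded as $q\to0^+$ as soon as $n\geq 2k+2$, so for large $n$ these functions do not even extend continuously to a neighbourhood of the exceptional fibre (in particular Corollary \ref{coronYn23} cannot hold as stated, and no convergent expansion of a smooth $F$ can be written in terms of them). Second, the convergence repair you outline cannot work in principle: $F$ is merely smooth, not analytic, so at a fixed small $z>0$ its Fourier coefficients decay faster than every polynomial in $n$ but in general \emph{not} exponentially; moreover the estimates of Lemma \ref{lem:alphank} tie the power of $z$ gained to the power of $1/n$ obtained, with constants $K_r$ involving $r$-th derivatives of $F$ that are uncontrolled, so neither a fixed large $r$ nor a diagonal choice $r=r(n)$ can absorb a factor $c^{\,n}$ with $c=\sup_C|W+iV|>1$ on a compact neighbourhood $C$ of the exceptional fibre. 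This is a genuine gap, not a technicality.

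The fix is algebraic rather than analytic: discard \eqref{eq:xnyn1} and define instead
\begin{equation*}
x_{n}=\frac{x_{1}}{z^{k}}\,x_{n-1}-\frac{y_{1}}{z^{k+1}}\,y_{n-1},\qquad
y_{n}=\frac{x_{1}}{z^{k}}\,y_{n-1}+\frac{y_{1}}{z^{k}}\,x_{n-1},
\end{equation*}
equivalently $x_n=z^k\,T_n(x/z^k)$ and $y_n=y\,U_{n-1}(x/z^k)$ with $T_n$, $U_{n-1}$ the Chebyshev polynomials of the first and second kind. An induction shows $\phi_k^*x_n=q^{2k}\cos(2n\pi p/q)$ and $\phi_k^*y_n=q^{2k+1}\sin(2n\pi p/q)$, so these are exactly the modes appearing in Proposition \ref{refr1}, and moreover $zx_n^2+y_n^2=z^{2k+1}$ for every $n$. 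Pulling back, $ZX_n^2+Y_n^2=Z^{2k+1}$ yields the $n$-independent bounds $X_n^2\leq Z^{2k}$ and $Y_n^2\leq Z^{2k+1}$ on $Z_{k+2}^+$, which is the exact analogue of the relation $X_n^2+Y_n^2=Z^{2k}$ that powers the proof of Proposition \ref{jiti}; the exponential growth you were trying to beat simply disappears. Smoothness of these corrected $X_n$, $Y_n$ follows from that of $W=\varphi^*(x/z^{k+1/2})$ and $V=\varphi^*(y/z^{k+1/2})$ together with the surface relation and the parity of the Chebyshev polynomials: every monomial occurring in $z^kT_n(x/z^k)$ or $y\,U_{n-1}(x/z^k)$ reduces to products of $\varphi^*x$, $\varphi^*y$, $Z^k$, $(1-V^2)^m$ and $WVZ^{k+1}$, using $(x/z^k)^2=1-V^2$ and $xy/z^k=WVZ^{k+1}$. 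With these functions, your expansion of the lift $H$, your $G'$-analogue of Lemma \ref{lem:alphank} (which is fine as stated), and the $U_n^D$-induction all go through exactly as in Proposition \ref{jiti}.
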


\noindent 
\begin{proof}
The proof of this result is identical to that of Proposition \ref{jiti}. We simply use Proposition \ref{refr1} instead of Proposition \ref{refr}, and Corollary \ref{coronYn23} instead of Corollary~\ref{coronYn}.
\end{proof}

\begin{thm}
For every $k \in \mathbb N$, the map $\underline{\psi_k} \circ \varphi : Z_{k+2}^+ \rightarrow \Rr^{2}\hspace*{-1.5pt} / \hspace*{-1.5pt}G'$ is a smooth symplectic resolution of the quotient Poisson $(\Rr^{2}\hspace*{-1.5pt} / \hspace*{-1.5pt}G',\{\cdot,\cdot \} ) $.

\end{thm}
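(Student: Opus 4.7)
The plan is to mimic exactly the proof of Theorem \ref{resultatp} with the ingredients already prepared for the $G'$-case, verifying each of the four conditions in Definition \ref{def:Lassoued} in turn. I expect no new serious difficulty: the hard analytic work has been absorbed into the preceding proposition (smoothness of $\underline{\psi_k}\circ\varphi$), which is the $G'$-analogue of Proposition \ref{jiti}, and whose proof already transfers verbatim using Proposition \ref{refr1} and Corollary \ref{coronYn23} in place of Proposition \ref{refr} and Corollary \ref{coronYn}.

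Condition \emph{(i)}, namely that $\underline{\psi_k}\circ\varphi$ is smooth in the sense that $F\in C^\infty(\Rr^2\hspace*{-1.5pt}/\hspace*{-1.5pt}G')$ pulls back to a smooth function on $Z_{k+2}$, is exactly the content of the proposition stated just before the theorem. For condition \emph{(ii)}, I would argue that $(\underline{\psi_k}\circ\varphi)^*$ is a Poisson morphism by factoring it through the two known Poisson morphisms: Proposition \ref{jti1} gives that $\underline{\psi_k}^*=(\bar\phi_k^{-1})^*$ is Poisson on the algebra $\mathcal{A}'$ (identified with $\mathcal{F}(D_{2k+1})$ via $\bar\phi_k$), while the first item of Proposition \ref{prop:sympresolA2k1} guarantees that $\varphi^*$ is a Poisson map from $\mathcal{F}(D_{2k+1}^+)$ to $C^\infty(Z_{k+2}^+)$ because the pulled-back Poisson structure is symplectic and agrees with $\pi_{k+2}$. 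The composition is therefore Poisson on the generators of $\mathcal{A}'$; since we already know pull-back of a general $F\in C^\infty(\Rr^2\hspace*{-1.5pt}/\hspace*{-1.5pt}G')$ is smooth, the Poisson morphism property extends from the generators to all of $C^\infty(\Rr^2\hspace*{-1.5pt}/\hspace*{-1.5pt}G')$ by continuity and density in the same Fourier-series sense used in Proposition \ref{jiti}.

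For conditions \emph{(iii)} and \emph{(iiii)}, I would take for $U\subset\Rr^2$ the open $G'$-saturated set $U=\{(p,q)\in\Rr^2\mid q\ne 0\}$. The $G'$-action on $U$ is free, proper and discrete, so $U\hspace*{-1.5pt}/\hspace*{-1.5pt}G'$ is a manifold. Lemma \ref{afair2} gives a bijection $\bar\phi_k:\Rr^2\hspace*{-1.5pt}/\hspace*{-1.5pt}G'\to D_{2k+1}^+$ sending $U\hspace*{-1.5pt}/\hspace*{-1.5pt}G'$ to $D_{2k+1}^+\setminus\{0\}$, and Proposition \ref{prop:sympresolA2k1} asserts that $\varphi$ restricts to a diffeomorphism $Z_{k+2}^+\setminus\varphi^{-1}(0)\simeq D_{2k+1}^+\setminus\{0\}$. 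Composing, $\underline{\psi_k}\circ\varphi$ restricts to a bijection onto $U\hspace*{-1.5pt}/\hspace*{-1.5pt}G'$; smoothness of this map and of its inverse follow from condition \emph{(i)} applied on both sides (away from the singular locus, all constructions reduce to smooth changes of charts via the explicit formulas for $\phi_k$).

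The only step where care is actually needed is the Poisson property in \emph{(ii)} at points of $\varphi^{-1}(0)$: one must note that any $F\in C^\infty(\Rr^2\hspace*{-1.5pt}/\hspace*{-1.5pt}G')$ can be written in the form \eqref{seriesf} (via Proposition \ref{refr1}), so that the identity $(\underline{\psi_k}\circ\varphi)^*\{F,F'\}=\{(\underline{\psi_k}\circ\varphi)^*F,(\underline{\psi_k}\circ\varphi)^*F'\}_{Z_{k+2}}$, already known on $\mathcal{A}'$ and on the dense open set $Z_{k+2}^+\setminus\varphi^{-1}(0)$, extends continuously to all of $Z_{k+2}^+$ by the uniform convergence of the Fourier series together with all its derivatives. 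Once this is granted, the four conditions of Definition \ref{def:Lassoued} are verified and the theorem follows.
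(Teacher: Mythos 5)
Your proposal is correct and follows essentially the same route as the paper: the paper's proof likewise just verifies the four conditions of Definition \ref{def:Lassoued} by citing the $G'$-analogues of the earlier results (the smoothness proposition preceding the theorem, Proposition \ref{jti1}, Lemma \ref{afair2}, and Proposition \ref{prop:sympresolA2k1}), exactly as you do, with your choice of $U=\{q\neq 0\}$ and the density/continuity argument for the Poisson property being the natural fleshing-out of what the paper leaves implicit. Only a cosmetic slip: the symplecticity of the pulled-back structure that you invoke for condition \emph{(ii)} is the \emph{second} item of Proposition \ref{prop:sympresolA2k1}, not the first, which instead gives the diffeomorphism onto the regular locus used for condition \emph{(iiii)}.
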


\begin{proof}[Proof]
Let us check that all four conditions in Definition \ref{def:Lassoued} are satisfied.
Condition \emph{(i)} has been established in Proposition \ref{jti1}.
Condition \emph{(ii)} follows from Proposition \ref{jti1} and the first item in Proposition \ref{prop:sympresolA2k1}.
Conditions \emph{(iii)} and \emph{(iiii)} hold in view of Lemma \ref{afair2} and the second item in Proposition~\ref{prop:sympresolA2k1}.
%
%
%
\end{proof}

\end{document}